\spnewtheorem{theorem}{Theorem}{\bfseries}{\itshape}
\spnewtheorem{corollary}[theorem]{Corollary}{\bfseries}{\itshape}
\spnewtheorem{lemma}[theorem]{Lemma}{\bfseries}{\itshape}
\spnewtheorem{proposition}[theorem]{Proposition}{\bfseries}{\itshape}
\spnewtheorem{definition}[theorem]{Definition}{\bfseries}{\itshape}
\spnewtheorem{remark}[theorem]{Remark}{\bfseries}{\upshape}
\spnewtheorem{assumption}[theorem]{Assumption}{\bfseries}{\itshape}
\renewcommand{\paragraph}[1]{{\bf #1.}}
\definecolor{myred}{rgb}{0.8,0,0}  
\noindent\textbf{Proof of {#1}:}}%
\newcommand{\diffns}{\mathrm{d}}
\DeclareMathAlphabet{\pazocal}{OMS}{zplm}{m}{n}
\let\mathcal\pazocal
\begin{document}
	
	\title{A stochastic maximum principle for singular mean-field regime-switching optimal control  \\  
	}

	\titlerunning{A stochastic maximum principle for singular mean-field regime-switching optimal control}        
	
	\author{Maalvlad\'edon Ganet Som\'e \and Edward Korveh}

	\authorrunning{M. Ganet Som\'e, E. Korveh} 

	\institute{Maalvlad\'edon Ganet Som\'e \at
		Department of Mathematics, School of Science, College of Science and Technology, University of Rwanda, Kigali 4285, Rwanda \\
		African Institute for Mathematical Sciences (AIMS), Ghana, P.O. Box LGDTD 20046, Legon, Accra,  Shoppers Street, Spintex, Accra, Ghana;  
		\email{\texttt{maalvladedon@aims.edu.gh}}
		\and
		Edward Korveh \at
		University of Ghana, Department of Mathematics, P.O. Box LG 25, Legon,  Ghana; \\
		African Institute for Mathematical Sciences (AIMS), Ghana, P.O. Box LGDTD 20046, Legon, Accra,  Shoppers Street, Spintex, Accra, Ghana;  
		\email{\texttt{ekorveh@ug.edu.gh}}           
	}
	
	\date{Version of  \today}

	\maketitle
	
	\begin{abstract}
	In this paper, we investigate a mean-field singular stochastic optimal control problem for systems governed by mean-field regime-switching singular stochastic differential equations. The state process is assumed to depend on both a regular and a singular control, and the coefficient associated with the singular component is allowed to be regime dependent. We derive both necessary and sufficient singular stochastic maximum principles. Because the regular control domain is not assumed to be convex, we employ the spike variation technique and obtain the necessary maximum principle by introducing a second-order adjoint process. As an application, we use the main theoretical results to analyse an inter-bank borrowing and lending model with transaction costs.
		
		\keywords{Mean-field control \and Regime-switching \and Stochastic Maximum Principle \and Singular control \and  spike variation}
		\subclass{49N80       
			\and 93E20           
			\and 60H10           
			\and 49K21        
		}	
	\end{abstract}

	\section{Introduction}
	
	\label{sec:man6intro}
	
	Mean-field type stochastic control has developed rapidly in recent years. These models originated in statistical physics, where the limiting behaviour of large systems of interacting particles is described through their aggregate, or mean-field, dynamics. They are now widely used in engineering, neuroscience, economics, finance, and insurance. In parallel, regime-switching models have become increasingly important in finance, as financial markets often experience abrupt structural changes caused by exogenous shocks. Such regime changes are commonly modelled using finite-state Markov chains (see for example, \cite{zhang2012stochastic} and references therein).
	
	Beyond classical (or \textsl{regular}) controls, singular controls also play a crucial role in many finance-related applications. A singular control is an adapted, non-decreasing, c\`adl`ag, finite-variation process that is singular with respect to Lebesgue measure. These controls capture interventions that do not operate through smooth, continuous adjustment rates; examples include consumption or withdrawal decisions, transaction costs, harvesting, and other lump-type actions.
	
In this paper, the mean-field effect is specified via the marginal distribution of a prescribed function of the state process. 
In our setting, the drift, diffusion, and regime-transition characteristics of the state dynamics depend on both this mean-field term and the regular control. The singular control enters the state stochastic differential equation (SDE) as an additional process whose coefficient is allowed to vary across regimes.
	
	Two main approaches exist for solving stochastic optimal control problems: the dynamic programming method and the stochastic maximum principle. In this paper, we adopt the latter approach. Because our objective functional depends on distributional characteristics of the state, the problem is time-inconsistent, and classical dynamic programming arguments are generally not directly applicable. Our formulation permits the objective to depend on the expectation of a function of the state process, rather than solely on the expectation of the state itself, yielding an even more general class of problems.
	
	The stochastic maximum principle was introduced by Kushner \cite{kushner1972necessary, kushner1965stochastic}, and a rich literature has since emerged, including many contributions for pure diffusion models \cite{bahlali1996maximum, bensoussan1981lectures,cadenillas1995stochastic,elliott1990optimal,haussmann1986stochastic}.
	Mean-field models were introduced into finance by Lasry and Lions \cite{lasry2007mean}after which mean-field type control problems have been studied extensively (see, for example, \cite{andersson2011maximum, buckdahn2009mean, hafayed2014singular,Thilo2012} and references therein). For regime-switching models, maximum principle-type results have been established in  \cite{donnelly2011sufficient, Menou20142, MeMo17,  nguyen2020stochastic,tao2012maximum}. Singular mean-field control problems have been addressed in \cite{dahl2017singular, hu2017singular}, where necessary and sufficient maximum principles were derived for singular SDEs; in those works, the corresponding adjoint equations are mean-field backward SDEs with reflection.
	
	In the present paper, we establish both necessary and sufficient maximum principles for a general singular mean-field control problem under regime switching and multiple sources of uncertainty. Our results strictly extend those in \cite{andersson2011maximum, bahlali2005general, hafayed2014singular}. Furthermore, the conclusions remain valid when the state dynamics include an additional Poisson jump component, thereby generalising the results in \cite{zhang2018general}. This current work is different from that of Wu and Zhang \cite{wu2024maximum} which studied a stochastic control problem for a forward-backward regime-switching system with conditional mean-field interactions using impulse controls. In our work, we considered general mean-field interactions and obained necessary and sufficient principles of optimality via spike variation.
	
	
	
	
The remainder of the paper is organised as follows. Section \ref{sec:man6problemformulation} introduces the model, formulates the stochastic optimal control problem, and provides conditions for the existence and uniqueness of solutions. Section \ref{sec:mainresultsandproofs} presents the main results—namely, the necessary and sufficient maximum principles—along with their proofs. Section \ref{sec:application} applies the theoretical findings to an inter-bank borrowing and lending problem with transaction costs. Section \ref{sec:man6conclusion} concludes the paper. Auxiliary lemmas and technical results are given in Appendix \ref{sec:proofofauxi}.

	\section{A Mean-field singular Markov regime switching model and the control problem}
	\label{sec:man6problemformulation}
	Let $T>0$ be fixed finite time horizon, and $(\Omega, \mathcal{F}, \{\mathcal{F}_{t}\}_{t\in [0,T]}, \mathbb{P})$ be a complete filtered probability space. The filtration $\{\mathcal{F}_{t}\}_{t\in [0,T]}$ is right-continuous and $\mathbb{P}$-completed to which all process defined below, including a Markov chain $\alpha := \{\alpha(t)\}_{t\in [0,T]}$, a $1$-dimensional standard Brownian motion $ B:= \left\{B(t)\right\}_{t\in [0,T]}$, and the nondecreasing process $\xi$, are adapted.
	$\alpha$ is a continuous time homogeneous and irreducible Markov chain with the finite state space $\mathcal{S} := \{e_1,e_2,\ldots,e_D\}$, with $D\in \mathbb{N}, e_i\in \mathbb{R}^D$ and for $i,j = 1,2,\ldots,D$, the $j$-th component of $e_i$ is the Kronecker delta $\delta_{ij}$. $\mathbb{N}$ denotes the set of natural numbers. $\mathcal{G} := [\zeta_{ij}]_{i,j=1,2,\ldots,D}$ denotes the generator of the Markov chain $\alpha$ under $\mathbb{P}$ also known as the rate or the $Q$-matrix. For each $i,j = 1,2,\ldots,D$, each entry $\zeta_{ij}$ of the rate matrix  represents the constant transition intensity of the chain from state $e_{i}$ to state $e_{j}$ at time $t$. Without loss of generality, we suppose  $\zeta_{ij}(t) > 0,\,i\neq j$ and $\sum\limits_{j=1}^{D}\zeta_{ij} = 0$, that is $\zeta_{ii}(t) < 0$. The following semimartingale dynamics for the Markov chain holds(see \cite{elliott2008hidden, zhang2012stochastic}):
	\[
	\mathrm{d}\alpha(t) = \mathcal{G}(t)\alpha(t)\mathrm{d}t + \mathrm{d}{\bf{M}}(t), \ \alpha(0)\in\mathbb{R}^{D},\ t\in[0,T],
	\]
	where $\{{\bf{M}}(t) | t\in[0,T]\}$ is an $\mathbb{R}^{D}$-valued, $(\{\mathcal{F}_{t}\}_{t\in[0,T]}, \mathbb{P})$-martingale. 
	For each $i,j = 1,2,\ldots, D$, with $i\ne j$ and $t\in [0,T]$, denote by $\mathcal{J}^{ij}(t)$ the number of jumps from state $e_{i}$ to state $e_{j}$ up to time $t$. Then using the martingale dynamics (see \cite{elliott1994exact}) 
	\begin{align*}
		\mathcal{J}^{ij} &:= \sum\limits_{0<s<t}\left\langle \alpha(s-),e_{i} \right\rangle \left\langle \alpha(s),e_{j} \right\rangle= \zeta_{ij}\int_{0}^{t}\left\langle \alpha(s-),e_{i} \right\rangle\mathrm{d}s + m_{ij}(t),
	\end{align*}
	where $m_{ij}(t):= \{m_{ij}(t) | t\in[0,T]\}$ with $m_{ij}(t):= \int_{0}^{t}\left\langle \alpha(s-),e_{i} \right\rangle \left\langle \mathrm{d}{\bf{M}}(s), e_{j} \right\rangle$ is an $(\{\mathcal{F}_{t}\}_{t\in [0,T]},\mathbb{P})$-martingale. The $m_{ij}$'s are also known as basic martingales associated with $\alpha$. For each fixed $j = 1,2,\ldots,D$, let $\Phi_j(t)$ denote the number of jumps into state $e_{j}$ up to time $t$. Then 
	\begin{align*}
		\Phi_j(t) &= \sum\limits_{i=1, i\ne j}^{D}\mathcal{J}^{ij}(t) = \zeta_{j}(t) + \tilde{\Phi}_{j}(t),
	\end{align*}
	where $\zeta_{j}(t) := \sum\limits_{i=1, i\ne j}^{D}\zeta_{ij}\int_{0}^{t}\left\langle \alpha(s), e_{i} \right\rangle\mathrm{d}s$, $\tilde{\Phi}_{j}(t) := \sum\limits_{i=1, i\ne j}^{D}m_{ij}(t)$, and for each $j=1,2,\ldots,D$, $\tilde{\Phi}_{j} := \{\tilde{\Phi}_{j}(t) | t\in [0,T]\}$ is an $(\{\mathcal{F}_{t}\}_{t\in [0,T]}, \mathbb{P})$-martingale. 
Next, we define 
\begin{align*}
	&\mathcal{M}^{2}(\mathbb{R}_{+}; \mathbb{R}^{D}) := \{f(\cdot):\mathbb{R}_{+}\to\mathbb{R}^{D} \text{ s.t } \| f(t) \|^{2}_{\mathcal{M}^{2}} := \sum\limits_{j=1}^{D} |f_{j}(t)|^{2}\zeta_{j}(t) < \infty\} \\
	&L^{2}(\mathcal{F}_{T};\mathbb{R}) := \{ X\in\mathbb{R},\ \mathcal{F}_{T}\text{-measurable random variables, s.t.}\ \mathbb{E}\left[|X|^{2}\right] < \infty\};\\
	&S^{2}([0,T];\mathbb{R}) := \{ f\in\mathbb{R},\ \mathcal{F}_{t}\text{-adapted c\`adl\`ag, s.t.}\ \mathbb{E}[ \sup\limits_{0\le t\le T}|f(t)|^{2} ] < \infty \};\\
	&L^{2}_{\mathcal{F},p}([0,T];\mathbb{R}) := \{ f\in\mathbb{R},\text{predictable processes, s.t.}\ \mathbb{E}[ \int_{0}^{T} | f(t)| ^{2}\mathrm{d}t ] < \infty \};\\
	&M^{2}_{p}([0,T];\mathbb{R}^{D}) := \{ f\in\mathbb{R}^{D},\text{predictable processes, s.t. }\mathbb{E}[ \int_{0}^{T}\|f(t,\cdot)\|^{2}_{\mathcal{M}^{2}}\mathrm{d}t ] < \infty \}.
\end{align*}
Let $A_1\subset \mathbb{R}$ be nonempty and $A_2 \subset [0,\infty)$. Denote by $U_1$ (resp. $U_2$) the class of measurable, adapted processes $u(t) = u(t,\omega) : [0,T]\times \Omega \rightarrow A_1$ (resp. $\xi(t) = \xi(t,\omega) : [0,T]\times \Omega \rightarrow A_2$ such that $\xi$ is nondecreasing, right-continuous with left limits and $\xi(0) = 0$).
We suppose the controlled state process $ X = \{X(t)\}_{t\in[0,T]}$ satisfies 
\begin{align}
	\label{eqn:man6meanfieldcontrolledstate}
	\mathrm{d}X^{u,\xi}(t) =& b(t, X^{u,\xi}(t),\mathbb{E}[\varphi(X^{u,\xi}(t))], u(t), \alpha(t))\mathrm{d}t \notag\\
	&+ \sigma(t, X^{u,\xi}(t),\mathbb{E}[\varphi(X^{u,\xi}(t))], u(t), \alpha(t))\mathrm{d}B(t) + G(t, \alpha(t-))\mathrm{d}\xi(t)\notag\\
	&+ \gamma(t, X^{u,\xi}(t-),\mathbb{E}[\varphi(X^{u,\xi}(t-))], u(t-), \alpha(t-))\mathrm{d}\tilde{\Phi}(t) \notag\\
	X^{u,\xi}(0) =& x_{0}\in\mathbb{R},
\end{align}
where $b:[0,T]\times\mathbb{R}\times\mathbb{R}\times A_1\times\mathcal{S} \to \mathbb{R}$, $\sigma:[0,T]\times\mathbb{R}\times\mathbb{R}\times A_1 \times\mathcal{S} \to \mathbb{R}$,
$\gamma:[0,T]\times\mathbb{R}\times\mathbb{R}\times A_1\times\mathcal{S} \to \mathbb{R}^{ D}$, and $G:[0,T]\times\mathcal{S}\to\mathbb{R}$ are given continuous functions, 
and $\tilde{\Phi} := \left(\tilde{\Phi}_{1}(t), \tilde{\Phi}_{2}(t), \ldots, \tilde{\Phi}_{D}(t)\right)$ with $\tilde{\Phi}_{j}(t)$ as given above.
We write $X(t) := X^{u,\xi}(t)$. 
Furthermore, $\mathbb{E}[\cdot]$ denotes the expectation with respect to the probability measure $\mathbb{P}$, and $\varphi : \mathbb{R}\to \mathbb{R}$ is a Lipschitz continuous function. 
\begin{definition}
	An admissible control is an $\mathcal{F}_t$-adapted pair $(u,\xi)\in \mathcal{U}= U_1\times U_2$ such that \eqref{eqn:man6meanfieldcontrolledstate} has a unique strong solution and 
	\begin{equation}
		\mathbb{E}[\sup_{t\in [0,T]}|u(t)|^2 + |\xi(T)|^2] < \infty. 
	\end{equation}
\end{definition}
Let us consider a performance criterion defined for each $x_{0}\in\mathbb{R}, e_{i}\in\mathcal{S}$ as
\begin{align}
	\label{eqn:man6performancecriterion}
	\begin{split}
		J(x_{0}, e_{i}; u,\xi) &:= \mathbb{E}_{x_{0},e_{i}}[ \int_{0}^{T}f(t,X(t),\mathbb{E}[\varphi(X(t))],u(t),\alpha(t)) \mathrm{d}t + \int_{0}^{T}\kappa(t)\mathrm{d}\xi(t)\\
		&\qquad\qquad+ h(X(T), \mathbb{E}[\varphi(X(T))],\alpha(T))],
	\end{split}
\end{align}
where $\mathbb{E}_{x_{0},e_{i}}$ denotes the conditional expectation given $X(0) = x_{0}$, and $\alpha(0) = e_{i}$ under the probability measure $\mathbb{P}$. Furthermore, the functions $f:[0,T]\times\mathbb{R}\times\mathbb{R}\times A_{1}\times\mathcal{S} \to \mathbb{R}$, $h:\mathbb{R}\times\mathbb{R}\times\mathcal{S}\to\mathbb{R}$, and $\kappa:[0,T]\to \mathbb{R}$ are given continuous functions. 
\begin{proposition}
	We wish to find $(u^{*}, \xi^{*})\in \mathcal{U}$ such that 
	\begin{equation}
		\label{eqn:man6optimalcontroproblem}
		J(x_{0},e_{i};u^{*},\xi^{*}) = \sup\limits_{(u,\xi)\in \mathcal{U}}J(x_{0},e_{i};u, \xi).
	\end{equation}
\end{proposition}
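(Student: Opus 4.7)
The plan is to establish existence of an optimal admissible pair $(u^{*},\xi^{*})$ attaining the supremum in \eqref{eqn:man6optimalcontroproblem} by a compactness-plus-continuity argument. First I would impose standard Lipschitz and linear-growth conditions on $b$, $\sigma$, $\gamma$, $G$, $\varphi$, $f$, $h$, $\kappa$ so that \eqref{eqn:man6meanfieldcontrolledstate} admits a unique solution $X^{u,\xi}\in S^{2}([0,T];\mathbb{R})$ for each $(u,\xi)\in\mathcal{U}$, with an a priori bound on $\mathbb{E}\sup_{t}|X^{u,\xi}(t)|^{2}$ depending only on $x_{0}$ and the admissibility norms of $(u,\xi)$. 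Writing $V:=\sup_{(u,\xi)\in\mathcal{U}} J(x_{0},e_{i};u,\xi)$ and assuming $V$ finite, I would then pick a maximizing sequence $(u_{n},\xi_{n})\subset\mathcal{U}$ with $J(x_{0},e_{i};u_{n},\xi_{n})\uparrow V$; using coercivity of $\kappa$ (or truncating with a vanishing penalty on $\mathbb{E}|\xi(T)|^{2}$ and $\mathbb{E}\sup_{t}|u(t)|^{2}$), the maximizing sequence may be taken with uniform bounds in those norms.

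Next I would establish tightness of the joint laws of $(X^{u_{n},\xi_{n}},\xi_{n},q_{n},\alpha,B,\tilde{\Phi})$ on an appropriate Skorokhod product space. Because $A_{1}$ is not assumed convex, I would pass to the relaxed formulation by viewing each regular control as the random measure $q_{n}(\mathrm{d}t,\mathrm{d}a):=\delta_{u_{n}(t)}(\mathrm{d}a)\mathrm{d}t$ on $[0,T]\times A_{1}$, and obtain tightness of $(q_{n})$ by Prokhorov's theorem after truncating or compactifying $A_{1}$. Monotonicity of $\xi_{n}$ together with the uniform $L^{2}$ bound on $\xi_{n}(T)$ gives tightness in the vague (or Meyer--Zheng) topology on non-decreasing càdlàg paths, while an Aldous-type criterion combined with the uniform $S^{2}$ estimate on $X^{u_{n},\xi_{n}}$ yields tightness of the state component. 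Skorokhod representation then places all these processes on a common probability space with almost-sure convergence along a subsequence, without altering the laws.

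I would then pass to the limit in \eqref{eqn:man6meanfieldcontrolledstate} piece by piece. The drift and diffusion terms converge in relaxed form by continuity of $b,\sigma$ in $(x,u)$; the mean-field term converges since $\mathbb{E}[\varphi(X^{u_{n},\xi_{n}}(t))]\to\mathbb{E}[\varphi(X^{*}(t))]$ by Lipschitzness of $\varphi$ together with uniform integrability from the $S^{2}$ bound; the singular integral $\int_{0}^{\cdot} G(s,\alpha(s-))\,\mathrm{d}\xi_{n}(s)$ passes to the limit by continuity of $G$, càdlàg regularity of $\alpha$, and weak convergence of $\mathrm{d}\xi_{n}$; and the compensated Markov-chain integral against $\tilde{\Phi}$ is preserved through the associated martingale problem and its quadratic characteristic in $\mathcal{M}^{2}$. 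The limit $(q^{*},\xi^{*})$ is an admissible relaxed pair with state $X^{*}$, and upper semi-continuity of $J$ under these convergences (Fatou for the running cost, weak convergence for $\int\kappa\,\mathrm{d}\xi$, and continuity of $h$) forces $J(x_{0},e_{i};q^{*},\xi^{*})\ge V$, hence equality.

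The main obstacle will be the mean-field coupling: since the coefficients themselves depend on $\mathbb{E}[\varphi(X_{n}(t))]$, one must pass to the limit in the SDE and in the marginal law of $X_{n}$ simultaneously, which typically requires a fixed-point argument on the measure flow $t\mapsto\mathrm{Law}(X(t))$ in a suitable Wasserstein space, or a bootstrap of uniform $L^{2}$ moment bounds in $n$. A second delicate point is returning from the relaxed optimum $q^{*}$ to a strict optimum $u^{*}\in U_{1}$: when $b,\sigma,\gamma,f$ are affine in $u$ this is automatic by Jensen; otherwise one invokes a chattering lemma (Fleming--Nisio type) to approximate $q^{*}$ by Dirac-valued controls with arbitrarily small loss in $J$, which in turn requires a density/approximation result tailored to the regime-switching and mean-field structure. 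If one only wishes to treat this proposition as a problem formulation rather than an existence theorem, then the remainder of the paper bypasses these obstacles by characterizing a presumed optimum through the stochastic maximum principle rather than constructing one.
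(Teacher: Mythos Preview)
Your closing caveat is the correct reading: in the paper this ``proposition'' is not a theorem to be proved but merely the \emph{statement of the optimal control problem}. The paper gives no proof of existence of an optimal pair $(u^{*},\xi^{*})$; instead it assumes such a pair exists and derives necessary conditions (Theorem~\ref{thm:man6nmp}), and separately gives sufficient conditions (Theorem~\ref{thm:man6sufficientconditionforsingularregimeswitchingmeanfield}) under which a candidate pair is optimal. So the elaborate compactness/relaxed-control/Skorokhod argument you outline is simply not present in, and not required by, the paper.

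If you intended this as an independent existence result, note two genuine difficulties with your sketch in the present setting. First, $A_{1}\subset\mathbb{R}$ is neither assumed compact nor convex, and the only integrability imposed on $u$ is $\mathbb{E}[\sup_{t}|u(t)|^{2}]<\infty$; without coercivity of the running cost in $u$ (here $f$ is general), a maximizing sequence need not have uniformly bounded control norms, so your tightness step for $q_{n}$ is unjustified. Second, passing to the limit in the $G(t,\alpha(t-))\,\mathrm{d}\xi_{n}(t)$ term under vague convergence of $\mathrm{d}\xi_{n}$ is delicate because $\alpha$ has jumps: simultaneous jumps of $\alpha$ and atoms of the limiting $\xi^{*}$ can spoil the continuity of the integrand against weakly convergent measures. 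Both issues would require additional structural assumptions beyond those in the paper.
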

\begin{assumption}
	\label{assumpt:man6conditionsforexistence}
	The following assumptions will be used throughout this work.
	\begin{itemize}
		\item[$(\mathcal{C}1)$] The functions $b,\sigma, \gamma$ are uniformly Lipschitz continuous with respect to $(x,y)$, and of linear growth in $(x,y,u)$, i.e., there exists a constant $K>0$ such that 
		\begin{align*}
			\bullet &\ | b(t, x_{1}, y_{1}, u, e_{i})  -  b(t, x_{2}, y_{2}, u, e_{i}) | + | \sigma(t, x_{1}, y_{1}, u, e_{i}) -  \sigma(t, x_{2}, y_{2}, u, e_{i}) |\\
			&+ \| \gamma(t, x_{1}, y_{1}, u, e_{i})  -  \gamma(t, x_{2}, y_{2}, u, e_{i}) \|_{\mathcal{M}^{2}} \le K\left( | x_{1} - x_{2} | + | y_{1} - y_{2} | \right);\\
			\bullet &\ | b(t, x, y, u, e_{i})| + | \sigma(t, x, y, u, e_{i}) | + \| \gamma(t, x, y, u, e_{i}) \|_{\mathcal{M}^{2}}\\
			&\le K\left( 1 + | x | + | y | + | u | \right).
		\end{align*}
		
		\item[$(\mathcal{C}2)$] The functions $G$ and $\kappa$ (resp. $\varphi$) are continuous and bounded (resp. continuously differentiable with bounded derivative).
		
		\item[$(\mathcal{C}3)$] The functions $b, \sigma, \gamma, f, h$ are twice continuously differentiable with respect to $(x,y)$.
		
		\item[$(\mathcal{C}4)$] The derivatives of the functions $b, \sigma, \gamma$ are bounded, and the derivatives of $f$ are bounded by $K\left( 1 + | x | + | y | + | u |\right)$ and those of $h$ are bounded by $K\left( 1 + | x | + | y |  \right)$ for some constant $K>0$.
	\end{itemize}
\end{assumption}

\begin{lemma}
	\label{lem:man6existenceanduniqueness}
	Under \ref{assumpt:man6conditionsforexistence}, the 
	SDE \eqref{eqn:man6meanfieldcontrolledstate} has a unique strong solution.
\end{lemma}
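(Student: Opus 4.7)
The plan is to construct the solution by a Picard iteration in $S^2([0,T];\mathbb{R})$. I would begin by setting $X^{(0)}(t):=x_0+\int_0^t G(s,\alpha(s-))\,\mathrm{d}\xi(s)$, which lies in $S^2$ by $(\mathcal{C}2)$ and the admissibility condition $\mathbb{E}[|\xi(T)|^2]<\infty$. Given $X^{(n)}\in S^2$, I would form the deterministic function $\mu^{(n)}(t):=\mathbb{E}[\varphi(X^{(n)}(t))]$, which is well defined since $\varphi$ has bounded derivative, and define $X^{(n+1)}$ by the right-hand side of \eqref{eqn:man6meanfieldcontrolledstate} with $X$ and $\mathbb{E}[\varphi(X)]$ replaced by $X^{(n)}$ and $\mu^{(n)}$ in the drift, diffusion and jump coefficients. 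The linear-growth condition in $(\mathcal{C}1)$ together with admissibility of $(u,\xi)$ guarantees $X^{(n+1)}\in S^2$ by a standard moment estimate.

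Next, I would derive a Cauchy estimate for $\Delta X^{(n)}(t):=X^{(n+1)}(t)-X^{(n)}(t)$. The singular term $\int_0^t G(s,\alpha(s-))\,\mathrm{d}\xi(s)$ cancels in this difference. Applying the Burkholder-Davis-Gundy inequality to the Brownian integral, the $L^2$-bound $\mathbb{E}\bigl[\sup_{s\le t}\bigl|\int_0^s H(r)\,\mathrm{d}\tilde\Phi(r)\bigr|^2\bigr]\le C\,\mathbb{E}\bigl[\int_0^t\|H(r)\|_{\mathcal{M}^2}^2\,\mathrm{d}r\bigr]$ to the jump-martingale integral, and the Lipschitz bound in $(\mathcal{C}1)$, one obtains
\begin{equation*}
\mathbb{E}\bigl[\sup_{s\le t}|\Delta X^{(n)}(s)|^2\bigr]\le C\int_0^t\Bigl(\mathbb{E}\bigl[|\Delta X^{(n-1)}(s)|^2\bigr]+|\mu^{(n)}(s)-\mu^{(n-1)}(s)|^2\Bigr)\,\mathrm{d}s.
\end{equation*}
The Lipschitz continuity of $\varphi$ in $(\mathcal{C}2)$ combined with Jensen's inequality gives $|\mu^{(n)}(s)-\mu^{(n-1)}(s)|^2\le K^2\,\mathbb{E}[|\Delta X^{(n-1)}(s)|^2]$, so the right-hand side is bounded by $C\int_0^t\mathbb{E}\bigl[\sup_{r\le s}|\Delta X^{(n-1)}(r)|^2\bigr]\,\mathrm{d}s$. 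Iterating this bound yields a factorial decay of the form $\|\Delta X^{(n)}\|^2_{S^2}\le (CT)^n/n!$, so $\{X^{(n)}\}$ is Cauchy in $S^2$ with some limit $X$.

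Passing to the limit in each integral via dominated convergence and the $L^2$-continuity of the Brownian and martingale stochastic integrals verifies that $X$ satisfies \eqref{eqn:man6meanfieldcontrolledstate}; uniqueness then follows from the same estimate applied to the difference of two hypothetical solutions, closed by Gronwall's lemma. The main obstacle I anticipate is the coupling between pathwise and distributional norms introduced by the mean-field term, but this is neutralised by the Jensen bound above; the three stochastic drivers $B$, $\tilde\Phi$ and $\xi$ are handled separately because $\xi$ enters only through the $X$-independent term $G\,\mathrm{d}\xi$, so no ideas beyond the standard McKean-Vlasov/regime-switching SDE machinery are required.
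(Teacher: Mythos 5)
Your Picard-iteration argument is correct and is essentially the same approach as the paper's: the paper merely cites \cite[Theorem~6, Chapter~V]{protter2005stochastic} and \cite[Theorem~3.1]{shen2013maximum}, both of which rest on exactly this contraction/iteration scheme, and you have correctly supplied the details — in particular the key observations that the singular term $G\,\mathrm{d}\xi$ is $X$-independent and cancels in differences, and that the mean-field term is controlled via the Lipschitz continuity of $\varphi$ and Jensen's inequality.
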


\begin{proof}
	The proof follows by combining arguments from \cite[Theorem 6, Chapter V]{protter2005stochastic} and \cite[Theorem 3.1]{shen2013maximum}.
\end{proof}
For a vector-valued differentiable function $ \Gamma(t), $ we denote by $\Gamma_{x}(t)$ its derivative with respect to the spacial variable $x$. We write
\begin{align*}
	\delta b(t,\alpha(t)) :=&  b(t,X^{u^{*},\xi^{*}}(t),\mathbb{E}[\varphi(X^{u^{*},\xi^{*}}(t))],u,\alpha(t)) \\
	&- b(t,X^{u^{*},\xi^{*}}(t),\mathbb{E}[\varphi(X^{u^{*},\xi^{*}}(t))],u^{*}(t),\alpha(t)),\\
	b_x^\mu(t,\alpha(t)) :=&  b_x(t,X^{u^{*},\xi^{*}}(t)+\mu (X^{u^{\theta},\xi^{*}}(t)-X^{u^{*},\xi^{*}}(t)),\\
	&\mathbb{E}[\varphi(X^{u^{*},\xi^{*}}(t))]+\mu(\mathbb{E}[\varphi(X^{u^{\theta},\xi^{*}}(t))]-\mathbb{E}[\varphi(X^{u^{*},\xi^{*}}(t))]),u^\theta,\alpha(t)) ,
\end{align*}
and similarly for $\delta \sigma, \delta \gamma, \sigma^\mu_x$ and $\gamma^\mu_x$.

\section{Main results and proofs}
\label{sec:mainresultsandproofs}
In this section we state and prove the necessary and sufficient conditions of optimality. We define the Hamiltonian $H:[0,T]\times\mathbb{R}\times\mathbb{R}\times A_{1}\times\mathcal{S}\times\mathbb{R}\times\mathbb{R}\times\mathbb{R}^{D}\to\mathbb{R}$ as follows:
	\begin{align}
		\label{eqn:man6hamiltonianmeanfieldsingular}
		\begin{split}
			H(t, x, y, u, e_{i}, p, q, s)
			&=f(t, x, y, u, e_{i}) + b(t, x, y, u, e_{i})p + \sigma(t, x, y, u, e_{i})q \\
			&+ \sum\limits_{j=1}^{D}\gamma^{j}(t,x,y,u,e_{i})s_{j}(t)\zeta_{ij}(t).
		\end{split}
	\end{align}
The corresponding first-order adjoint processes $(p(t), q(t), s(t))\in L^{2}(\mathcal{F};\mathbb{R})\times L^{2}_{\mathcal{F}, p}([0,T]; \mathbb{R}) \times \mathcal{M}^{2}(\mathbb{R}_{+};\mathbb{R}^{D})$ associated to the Hamiltonian satisfies the following  BSDE: 

	\begin{align}
		\label{eqn:man6adjointforxmeanfieldsingular}
		\mathrm{d}p(t) =& -\big\{H_{x}(t,e_{i}) + \mathbb{E}\left[ H_{y}(t,e_{i}) \right]\varphi_{x}(X(t))\big\}\mathrm{d}t  + q(t)\mathrm{d}B(t) + s(t)\mathrm{d}\tilde{\Phi}(t)\notag\\
		=&-\big\{ f_{x}(t,e_{i}) + b_{x}(t,e_{i})p(t) + \sigma_{x} (t,e_{i})q(t)+ \sum\limits_{j=1}^{D}\gamma_{x}^{j}(t,e_{i})s_j(t)\zeta_{ij}(t) \notag\\
		&+\big( \mathbb{E}[f_{y}(t,e_{i})]+ \mathbb{E}[b_{y}(t,e_{i})p(t)] +\mathbb{E}[\sigma_{y}(t,e_{i})q(t)]\notag \\
		&+ \mathbb{E}[\gamma_{y}(t,e_{i})s(t)\zeta(t)]\big)\varphi_{x}(X(t))\big\}\mathrm{d}t  + q(t)\mathrm{d}B(t)  + s(t)\mathrm{d}\tilde{\Phi}(t),\\
		p(T) =& h_{x}(X(T), \mathbb{E}[\varphi(X(T))], \alpha(T)) + \mathbb{E}[h_{y}(X(T), \mathbb{E}[\varphi(X(T))], \alpha(T)) ]\varphi_{x}(X(T)).\notag
	\end{align}
 
We also define the  corresponding second-order adjoint processes $(P(t), Q(t), S(t))\in L^{2}(\mathcal{F};\mathbb{R})\times L^{2}_{\mathcal{F}, p}([0,T]; \mathbb{R}) \times \mathcal{M}^{2}(\mathbb{R}_{+};\mathbb{R}^{D})$: 

	\begin{align}
		\label{eqn:man6secondorderadjointequationforP1}
		\mathrm{d}P(t) =& -\Big(2b_{x}(t,e_{i})P(t) + ( \sigma_{x}(t,u,e_{i}))^{2}P(t) + 2\sigma_{x}(t,e_{i})Q(t)\notag\\
		&+ \sum\limits_{j=1}^D( \gamma^{j}_{x}(t)^{2}(P(t) + S_{j}(t)) + 2\gamma^{j}_{x}(t)S_j(t))\zeta_{ij}(t) + H_{xx}(t,e_{i})\Big)\mathrm{d}t \notag\\
		&+ Q(t)\mathrm{d}B(t) + S(t)\mathrm{d}\tilde{\Phi}(t),\\
		P(T) =& h_{xx}(X(T),\mathbb{E}[\varphi(X(T))], \alpha(T)).
	\end{align}

\subsection{Main results}
We now state the first main result of this section. 
\begin{theorem}[Necessary maximum principle]
	\label{thm:man6nmp}
	Suppose Assumptions \ref{assumpt:man6conditionsforexistence} 
	hold. \\Let $(X^{u^{*},\xi^{*}}(\cdot), u^{*}(\cdot), \xi^{*}(\cdot))$ be an optimal solution to the control problem  \eqref{eqn:man6meanfieldcontrolledstate}--\eqref{eqn:man6optimalcontroproblem}. Then the first-order and second-order adjoint equations, \eqref{eqn:man6adjointforxmeanfieldsingular}--\eqref{eqn:man6secondorderadjointequationforP1}, admit unique solutions $(p(\cdot), q(\cdot),s(\cdot))$, and  $(P(\cdot), Q(\cdot), S(\cdot))$, respectively, and 
		\begin{align}
			\label{eqn:man6variationalinequality}
			H&(t, X^{u^{*},\xi^{*}}(t), \mathbb{E}[\varphi(X^{u^{*},\xi^{*}}(t))],u,\alpha(t), p(t), q(t),s(t)) \notag\\
			&-  H(t, X^{u^{*},\xi^{*}}(t), \mathbb{E}[\varphi(X^{u^{*},\xi^{*}}(t))],u^{*}(t),\alpha(t), p(t), q(t),s(t))\notag\\
			&+ \frac{1}{2}P(t)\Big( \delta\sigma(t,\alpha(t))\Big)^{2} + \frac{1}{2}\sum\limits_{j=1}^{D}\Big(P(t) + S_j(t)\Big)\Big(\delta\gamma^{j}(t,\alpha(t),z) \Big)^{2}\zeta_{ij}(t)\\
			& \leq 0\ \forall u \in A_{1},\ \text{a.e}\ t\in [0,T], \mathbb{P}\text{-a.s}\notag
		\end{align}
	with 
	\begin{equation}
		\label{eqn:prob1}
		\mathbb{P}\{\forall t\in [0,T], (\kappa(t) + G(t, 
		\alpha(t-))p(t)) \le 0 \} = 1,
	\end{equation}
	\begin{equation}
		\label{eqn:prob2}
		\mathbb{P}( \mathbbm{1}_{\{\kappa(t) + G(t, \alpha(t-))p(t) \le 0\}}\mathrm{d}\xi^{*}(t) = 0 ) = 1.
	\end{equation}
	
\end{theorem}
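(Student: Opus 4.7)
The plan is to combine a spike variation of the non-convex regular control with a convex perturbation of the singular control, treating the two perturbations independently to obtain, respectively, the variational inequality \eqref{eqn:man6variationalinequality} and the singular-control conditions \eqref{eqn:prob1}--\eqref{eqn:prob2}.

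For the regular part, I would fix $\tau\in[0,T)$, $\epsilon>0$ small, and $v\in A_1$, and define the spike
\[
u^\theta(t) = v\,\mathbbm{1}_{[\tau,\tau+\epsilon]}(t) + u^*(t)\,\mathbbm{1}_{[\tau,\tau+\epsilon]^c}(t),
\]
keeping $\xi=\xi^*$. I would introduce first- and second-order variation processes $y_1(\cdot)$ and $y_2(\cdot)$, driven by linear SDEs whose coefficients are built from $b_x,\sigma_x,\gamma_x$ evaluated along the optimal trajectory together with the spike increments $\delta b,\delta\sigma,\delta\gamma$ introduced after Lemma \ref{lem:man6existenceanduniqueness}. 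Following the regime-switching spike-variation arguments of \cite{nguyen2020stochastic,zhang2018general} adapted to our mean-field setting, one proves the $L^2$ expansion
\[
X^{u^\theta,\xi^*}(t)-X^{u^*,\xi^*}(t) = y_1(t)+y_2(t)+o(\epsilon),
\]
with $\mathbb{E}\sup_t|y_1(t)|^2=O(\epsilon)$ and $\mathbb{E}\sup_t|y_2(t)|^2=O(\epsilon^2)$; the mean-field terms produce contributions of the form $\varphi_x(X^{u^*,\xi^*})\mathbb{E}[y_i(t)]$ through the chain rule, which are controlled via $(\mathcal{C}1),(\mathcal{C}3),(\mathcal{C}4)$. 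Expanding $J(x_0,e_i;u^\theta,\xi^*)-J(x_0,e_i;u^*,\xi^*)\le 0$ to second order and applying It\^o's formula to $p(t)(y_1(t)+y_2(t))+\tfrac12 P(t)y_1(t)^2$ with the adjoint equations \eqref{eqn:man6adjointforxmeanfieldsingular}--\eqref{eqn:man6secondorderadjointequationforP1}, the terminal data $h_x,h_{xx}$ are eliminated, and the expectation terms $\mathbb{E}[H_y(t,e_i)]\varphi_x(X)$ in the drift of $p$ cancel the mean-field contributions from $y_i$ via Fubini. Dividing by $\epsilon$ and letting $\epsilon\downarrow 0$ at a Lebesgue point $\tau$ delivers \eqref{eqn:man6variationalinequality}.

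For the singular part, I would fix $u=u^*$ and take the convex perturbation $\xi^\theta:=\xi^*+\theta(\eta-\xi^*)$, $\theta\in[0,1]$, for arbitrary $\eta\in U_2$. The pathwise derivative $Z(t):=\partial_\theta X^{u^*,\xi^\theta}(t)\big|_{\theta=0^+}$ satisfies a linear SDE forced by $G(t,\alpha(t-))\,\mathrm{d}(\eta-\xi^*)(t)$, with coefficients given by the partial derivatives of $b,\sigma,\gamma$ at the optimum. Optimality gives $\frac{\mathrm{d}}{\mathrm{d}\theta}J(x_0,e_i;u^*,\xi^\theta)\big|_{\theta=0^+}\le 0$, and It\^o's formula applied to $p(t)Z(t)$, combined with integration by parts against $\mathrm{d}\xi$, produces
\[
\mathbb{E}\int_0^T\bigl(\kappa(t)+G(t,\alpha(t-))p(t)\bigr)\,\mathrm{d}(\eta-\xi^*)(t)\le 0.
\]
Taking $\eta=\xi^*+\mathbbm{1}_{[s,T]}\mathbbm{1}_A$ for arbitrary $s\in[0,T]$ and $A\in\mathcal{F}_s$ yields \eqref{eqn:prob1}, while combining \eqref{eqn:prob1} with the choice $\eta=0$ gives the complementary-slackness identity \eqref{eqn:prob2}.

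The hard part will be the bookkeeping in the regime-switching It\^o expansion: the quadratic variation of the basic martingales $\tilde\Phi_j$ produces the weights $\zeta_{ij}(t)$ and the cross-terms $(\gamma^j_x)^2(P+S_j)$ and $2\gamma^j_x S_j$ in the generator of \eqref{eqn:man6secondorderadjointequationforP1}, and these must be reconciled with the mean-field expectation terms in the adjoint drift. Existence and uniqueness of $(p,q,s)$ and $(P,Q,S)$ in the stated spaces follow from standard mean-field BSDE theory with Markov-chain driving noise (cf.\ \cite{shen2013maximum}), using the boundedness of $\varphi_x$, $G$, $\kappa$ and the derivative bounds in $(\mathcal{C}4)$.
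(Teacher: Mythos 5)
Your proposal is correct and follows essentially the same route as the paper: spike variation with first- and second-order variational processes and adjoint duality for the regular control, and a convex perturbation with the Cadenillas--Haussmann argument for the singular control. The only organizational difference is that the paper perturbs both controls simultaneously with a common parameter $\theta$ and splits the cost increment as $J_1+J_2$ before specializing $\iota=\xi^*$ (resp.\ $u=u^*$) to extract \eqref{eqn:man6variationalinequality} and \eqref{eqn:prob1}--\eqref{eqn:prob2}, whereas you decouple the two perturbations from the outset; the underlying estimates and It\^o dualities are the same.
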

\begin{remark}
	We prove Theorem \ref{thm:man6nmp} in two steps. In step 1 we derive the variational equations and associated moment estimates (Lemmas \ref{lem:man6lemma1}--\ref{lem:man6lemma5} and Proposition \ref{prop:man6propositionpage201}). The second step is devoted to deriving the duality relations between the adjoint processes and the variational equations (see Lemma \ref{man6lemma**} and  Proposition \ref{man6proposition4.6zhangsun}). 
\end{remark}
The next additional assumptions are needed for the sufficient conditions of optimality:
\begin{assumption}
	\label{assumpt:man6conditionsforsufficient}\leavevmode
	\begin{itemize}
		\item[$(\mathcal{D}1)$] The control domain $A_1$ is convex.
		
		\item[$(\mathcal{D}2)$] The coefficients $b, \sigma, \gamma,f$ are differentiable with respect to $u$.
		
	\end{itemize}
\end{assumption}

\begin{theorem}[Sufficient maximum principle]
	\label{thm:man6sufficientconditionforsingularregimeswitchingmeanfield}
	Suppose Assumptions \ref{assumpt:man6conditionsforexistence} and \ref{assumpt:man6conditionsforsufficient} hold. Let $(u^{*}, \xi^{*}) \in \mathcal{U}$ be an arbitrary control and $X^{*}(t) := X^{u^{*},\xi^{*}}(t)$ be the corresponding controlled state process. Suppose there exist adapted solutions $(\hat{p}(t), \hat{q}(t), \hat{s}(t))$ to the corresponding first-order adjoint equation \eqref{eqn:man6adjointforxmeanfieldsingular} such that the following integrability conditions hold:
	
	\begin{align*}
		&\mathbb{E}[ \int_{0}^{T}\left( X^{*}(t) - X(t) \right)\{ \hat{q}(t)\hat{q}(t)^{\top} + \hat{s}(t)\text{Diag}(\zeta(t))\hat{s}(t)^{\top}\}\left(X^{*}(t) - X(t)\right)\mathrm{d}t] < \infty,\\
		&\mathbb{E}[ \int_{0}^{T}p(t)\{ (\sigma\sigma)^{\top}(t,X(t), \mathbb{E}[\varphi(X(t))], u(t), \alpha(t))\} \hat{p}(t) \mathrm{d}t] < \infty.
	\end{align*}
	Furthermore, assume that the following conditions hold:
	\begin{enumerate}
		\item For each pair $(t, e_{i})\in [0,T]\times \mathcal{S}$, $H$, and  $h$ are concave functions in $x, y, u$.
		
		\item For almost all $t\in [0,T]$
		\begin{align}
			\label{eqn:man6maximumcondition}
			&H(t, X^{*}(t), \mathbb{E}[\varphi(X^{*}(t))], u^{*}(t), e_{i}(t), p(t), q(t),s(t))\notag\\
			&= \sup\limits_{u\in A_{1}}H(t, X^{*}(t), \mathbb{E}[\varphi(X^{*}(t))], u, e_{i}(t), p(t), q(t), s(t)),\ \mathbb{P}\text{-a.s}.
		\end{align}
		
		\item 
		with 
			\begin{align}
				\label{probcond}
				\begin{split}
					&\mathbb{P}\{ \forall t \in [0,T],\left( \kappa(t) + G(t, \alpha(t-))p(t) \right) \le 0 \} = 1\\
					&\text{and}\\
					&\mathbb{P}( \mathbbm{1}_{\{\kappa(t) + G(t, \alpha(t-))p(t)\le 0\}}\mathrm{d}\xi^{*}(t)) = 1.
				\end{split}
			\end{align}
\end{enumerate}

Then $(u^{*}, \xi^{*})$ is an optimal control and $X^{*}(\cdot)$ is the optimal state process.
\end{theorem}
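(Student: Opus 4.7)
The plan is to show, for every admissible $(u,\xi)\in\mathcal{U}$, that
\[
\Delta J := J(x_0,e_i;u^*,\xi^*) - J(x_0,e_i;u,\xi) \ge 0,
\]
by combining an It\^o expansion of the adjoint pairing with the concavity of $H$ and $h$ and the complementarity conditions on $\kappa + Gp$. First I would decompose $\Delta J$ into three pieces: a running-cost integral with integrand $f^* - f$, a singular-cost integral $\int_0^T \kappa\,(d\xi^* - d\xi)$, and a terminal piece $h^* - h$. In parallel, apply It\^o's product formula to $p(t)(X^*(t) - X(t))$, using \eqref{eqn:man6meanfieldcontrolledstate} for both state processes and \eqref{eqn:man6adjointforxmeanfieldsingular} for $p$. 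Taking expectation annihilates the $dB$ and $d\tilde\Phi$ stochastic integrals thanks to the two integrability hypotheses in the statement, producing an identity that expresses $\mathbb{E}[p(T)(X^*(T) - X(T))]$ in terms of (i) the drift difference $b^* - b$ paired against $p$, (ii) the cross-variation terms $q(\sigma^* - \sigma)$ and $\sum_j s_j(\gamma^{*j} - \gamma^j)\zeta_{ij}$, (iii) the Hamiltonian derivatives $H_x$ and $\mathbb{E}[H_y]\varphi_x(X^*)$ that make up the adjoint drift, and (iv) the singular increment $G(t,\alpha(t-))p(t)(d\xi^* - d\xi)$.

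Second, I would use the terminal condition of \eqref{eqn:man6adjointforxmeanfieldsingular}, together with concavity of $h$ in $(x,y)$ and Fubini's theorem (to exchange the outer $\mathbb{E}$ with the frozen constant $\mathbb{E}[h_y]$), to bound $\mathbb{E}[h^* - h]$ from below by $\mathbb{E}[p(T)(X^*(T) - X(T))]$. Analogously, concavity of $H$ in $(x,y,u)$ together with the maximum condition \eqref{eqn:man6maximumcondition} — which, under convexity of $A_1$ from $(\mathcal{D}1)$, yields the subdifferential inequality $H_u(t,X^*,\mathbb{E}[\varphi(X^*)],u^*,\alpha,p,q,s)\,(u - u^*)\le 0$ — produces a lower bound on $\mathbb{E}\!\left[\int_0^T (f^* - f)\,dt\right]$ in terms of exactly the drift, diffusion and jump cross-terms that appear in the It\^o identity. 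Substituting the It\^o identity into this lower bound collapses the running and terminal contributions into a non-negative quantity.

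Third, what is left of $\Delta J$ is the singular contribution
\[
\mathbb{E}\!\left[\int_0^T \left(\kappa(t) + G(t,\alpha(t-))p(t)\right)(d\xi^*(t) - d\xi(t))\right].
\]
By the first line of \eqref{probcond} the integrand is non-positive $\mathbb{P}$-a.s., whence $-\int_0^T (\kappa + Gp)\,d\xi \ge 0$ for any admissible $\xi\ge 0$; by the flat-off condition on the second line, the support of $d\xi^*$ lies in $\{\kappa + Gp = 0\}$, so $\int_0^T (\kappa + Gp)\,d\xi^* = 0$. These two facts together supply the remaining non-negative contribution and close the argument.

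The hard part will be the mean-field bookkeeping. The terms $\mathbb{E}[H_y]\varphi_x(X^*)$ in the adjoint drift and $\mathbb{E}[h_y]\varphi_x(X^*)$ in the terminal condition force a careful Fubini interchange between the outer expectation taken after It\^o's formula and the inner expectations already baked into $b_y,\sigma_y,\gamma_y,f_y,h_y$; one must verify, for instance, that the pairing of $\mathbb{E}[h_y^*]\varphi_x(X^*(T))$ with $X^*(T)-X(T)$, once averaged, can be absorbed into the population-level concavity inequality for $h$ without loss. Justifying that the $\int s(t)\,d\tilde\Phi(t)$ integral paired with $X^* - X$ is a true martingale (rather than only a local one) is the other delicate point, and this is where the first integrability hypothesis of the statement is invoked. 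Once these mean-field and integrability issues are handled, the concavity step on $H$, the variational inequality in $u$, and the complementarity argument for $\xi$ proceed by routine manipulations in the spirit of \cite{dahl2017singular,hu2017singular}.
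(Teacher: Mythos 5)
Your proposal is correct and follows essentially the same route as the paper's own proof: the same three-term decomposition of the cost difference, It\^o's product rule applied to $p(t)\,(X^*(t)-X(t))$, concavity of $h$ for the terminal term, the Hamiltonian rewriting plus concavity of $H$ and the maximum condition for the regular control, and the complementarity conditions \eqref{probcond} for the singular part. The points you flag as delicate (the Fubini interchange for the mean-field terms and the true-martingale property of the stochastic integrals, guaranteed by the stated integrability hypotheses) are exactly the ones the paper relies on implicitly.
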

We use spike variation for the regular control $u$, and convex perturbation for the singular control $\xi$ and define the perturbations as follows:

$$ (u^{\theta}(t), \xi^{\theta}(t)) := \begin{cases}
(u^{*}(t), \xi^{*}(t) + \theta(\iota(t) - \xi^{*}(t))), \ & t\in [0,T]\setminus E_{\theta},\\
(v, \xi^{*}(t) + \theta(\iota(t) - \xi^{*}(t))),\ & \text{otherwise},
\end{cases} $$
where $E_{\theta}\subset [0,T]$ is a measurable set with Lebesgue measure $|E_{\theta}| = \theta$, where $\theta > 0$ is sufficiently small, $v$ is an $A_{1}$-valued, $\mathcal{F}_{t}$-measurable random variable, and $\iota$ is a non-decreasing, left continuous with right limits and $\iota(0) = 0$. $u^{\theta}$ is the spike variation of the control process $u^{*}(\cdot)$. Since $(u^{*}, \xi^{*})$ is an optimal control pair, we have

\[
J(u^{\theta},\xi^{\theta}) - J(u^{*}, \xi^{*}) =  	J(u^{\theta},\xi^{\theta}) - J(u^{\theta},\xi^{*}) + J(u^{\theta},\xi^{*}) - J(u^{*}, \xi^{*})=J_{1}+J_{2} \le 0.
\]
Then the variational inequality will be obtained, if
\begin{equation}
\label{eqn:man6inequalityoflimits}
\lim\limits_{\theta\rightarrow 0}\frac{1}{\theta}J_{1} + \lim\limits_{\theta\rightarrow 0}\frac{1}{\theta}J_{2} \le 0.
\end{equation}
Let $X^{\theta,\xi^{\theta}}(t) := X^{u^{\theta}, \xi^{\theta}}(t),\ X^{\theta,\xi^{*}}(t) := X^{u^{\theta},\xi^{*}}(t)$ be the trajectories associated with the control pairs $(u^{\theta},\xi^{\theta})$, and $(u^{\theta}, \xi^{*})$, respectively. We compute the limits in \eqref{eqn:man6inequalityoflimits} separately. 
Next, we state some auxiliary results whose proofs are given later.
\begin{lemma}
\label{lem:man6lemma1}
Under Assumption \ref{assumpt:man6conditionsforexistence}, we have 

\begin{equation*}
	\lim_{\theta\rightarrow 0}\mathbb{E}[\sup_{t\in[0,T]}|\frac{X^{\theta,\xi^\theta}(t)-X^{\theta,\xi^{*}}(t)}{\theta}-Z(t)|^2] = 0,
\end{equation*}
where $Z$ is the solution to the following regime switching linear SDE
	\begin{align*}
		\mathrm{d}Z(t) =& \{b_x(t)Z(t) + b_y(t)\mathbb{E}[\varphi_x(X(t))Z(t)]\}\mathrm{d}t \\ 
		&+ \{\sigma_x(t)Z(t)+\sigma_y(t)\mathbb{E}[\varphi_x(X(t))Z(t)]\}\mathrm{d}B(t) + G(t, \alpha(t-))\mathrm{d}(\iota-\xi^{*})(t)  \\
		&+ \{\gamma_x(t)Z(t) + \gamma_y(t)\mathbb{E}[\varphi_x(X(t))Z(t)]\}\mathrm{d}\tilde{\Phi}(t)\\
		Z(0) =& 0.
	\end{align*}
In addition, we have
	\begin{align}\label{pror1}
	\mathbb{E}[p(T)Z(T)] =&- \mathbb{E}[ \int_{0}^{T}Z(t)( f_{x}(t) + \mathbb{E}[f_{y}(t)]\varphi_{x}(X(t)) )\mathrm{d}t]\notag\\
	&+\mathbb{E}[\int_0^Tp(t)G(t,\alpha(t))\mathrm{d}(\iota-\xi^{*})(t),
\end{align}
\end{lemma}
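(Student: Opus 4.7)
The plan is to prove the two assertions in sequence. For the $L^2$-convergence, I would first establish the a priori estimate $\mathbb{E}[\sup_{t\in[0,T]}|X^{\theta,\xi^\theta}(t)-X^{\theta,\xi^*}(t)|^2] = O(\theta^2)$ by writing the SDE for the difference: because the regular control $u^\theta$ is common to both trajectories, only the state-dependent coefficients contribute to the drift, diffusion, and jump parts, while the singular contribution is exactly $\theta\,G(t,\alpha(t-))\,d(\iota-\xi^*)(t)$ by definition of $\xi^\theta$. Applying Burkholder--Davis--Gundy to the martingale parts, invoking the Lipschitz assumption $(\mathcal{C}1)$ on $b,\sigma,\gamma$ in $(x,y)$, boundedness of $G$ from $(\mathcal{C}2)$, Lipschitz continuity of $\varphi$, and then a Gronwall argument yields the $O(\theta^2)$ bound.

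Next I set $R^\theta(t) := [X^{\theta,\xi^\theta}(t)-X^{\theta,\xi^*}(t)]/\theta - Z(t)$ and apply the mean-value form of Taylor's expansion to each of $b,\sigma,\gamma$, producing exactly the interpolated derivatives $b^\mu_x,b^\mu_y$, etc., from the notation introduced after Lemma \ref{lem:man6existenceanduniqueness}. The singular integrator cancels identically, so $R^\theta$ satisfies a linear SDE with coefficients of the form $b_x^\mu R^\theta + (b_x^\mu-b_x)Z + b_y^\mu\mathbb{E}[\varphi_x^\mu R^\theta] + (b_y^\mu\varphi_x^\mu-b_y\varphi_x)\mathbb{E}[Z]$-type remainders (and analogously for $\sigma,\gamma$). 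Using $(\mathcal{C}4)$ (boundedness of derivatives), the $O(\theta^2)$ estimate above, continuity of the coefficients, and dominated convergence, the remainder terms vanish in $L^2$; another BDG plus Gronwall pass then delivers $\mathbb{E}[\sup_t|R^\theta(t)|^2]\to 0$.

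For the duality relation \eqref{pror1}, I apply It\^o's formula to $p(t)Z(t)$, using the adjoint dynamics \eqref{eqn:man6adjointforxmeanfieldsingular} and the SDE for $Z$, and take expectation. Under $(p,q,s)\in L^2\times L^2_{\mathcal{F},p}\times\mathcal{M}^2$ the Brownian and $\tilde\Phi$ stochastic integrals have zero expectation. The $x$-derivative contributions collapse neatly: the sum $-Z\,H_x + p\,b_x Z + q\,\sigma_x Z + \sum_j s_j\gamma_x^j Z\,\zeta_{ij}$ reduces to $-Z f_x$ by the very definition \eqref{eqn:man6hamiltonianmeanfieldsingular} of $H$. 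The singular integrator in the $Z$-dynamics produces $\mathbb{E}[\int_0^T p(t)G(t,\alpha(t-))\,d(\iota-\xi^*)(t)]$, which is the desired boundary term.

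The main technical step — and the only delicate one — is handling the mean-field cross terms. The $Z$-drift carries $b_y(t)\mathbb{E}[\varphi_x(X(t))Z(t)]$ (and analogues for $\sigma_y,\gamma_y^j$), while the $p$-drift carries $\mathbb{E}[f_y + p\,b_y + q\,\sigma_y + \sum_j s_j\gamma_y^j\zeta_{ij}]\,\varphi_x(X(t))$. After taking expectation and pulling out the deterministic inner expectations via Fubini, the elementary identity
\begin{equation*}
\mathbb{E}\bigl[p(t)b_y(t)\,\mathbb{E}[\varphi_x(X(t))Z(t)]\bigr] \;=\; \mathbb{E}\bigl[\varphi_x(X(t))Z(t)\,\mathbb{E}[p(t)b_y(t)]\bigr],
\end{equation*}
applied separately to the $b_y$, $\sigma_y$, and each $\gamma_y^j$ contribution, produces pairwise exact cancellations, leaving only $-\mathbb{E}[Z(t)\,\mathbb{E}[f_y(t)]\,\varphi_x(X(t))]$. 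Combined with the $-Zf_x$ term and the singular boundary contribution, this gives precisely the right-hand side of \eqref{pror1}, and the integrability hypotheses together with $(\mathcal{C}4)$ justify every Fubini swap and the vanishing of the stochastic integrals.
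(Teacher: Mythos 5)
Your proposal is correct and follows essentially the same route as the paper: the $O(\theta^2)$ a priori bound via BDG, Lipschitz continuity and Gronwall, then the mean-value expansion of the coefficients around the interpolated trajectories and a second BDG--Gronwall pass for the remainder $Z_1^\theta$. For the duality relation \eqref{pror1}, which the paper's appendix proof leaves implicit, your It\^o-plus-Fubini argument (collapsing the $x$-derivative terms via the definition of $H$ and cancelling the mean-field cross terms by pulling out the deterministic inner expectations) is exactly the computation the paper relies on elsewhere (cf.\ Lemma \ref{man6lemma**}), and it is carried out correctly.
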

\begin{proof}
	See Appendix \ref{sec:proofofauxi}.
\end{proof}
\begin{lemma}
\label{lem:man6lemma2}
Under \ref{assumpt:man6conditionsforexistence}, it holds
\begin{equation*}
	\mathbb{E}[\sup_{t\in[0,T]}|X^{\theta,\xi^{*}}(t)-X^{*}(t)-X_1(t)-X_2(t)|^2] \le C\theta^2\phi(\theta),
\end{equation*}
where $X_1$ and $X_2$ are solutions to the following SDEs
	\begin{align}
		\label{eqn:man6firstodervariationalequation}
		\mathrm{d}X_1(t) =& \{b_x(t)X_1(t) + b_y(t)\mathbb{E}[\varphi_x(X(t))X_1(t)] + b(t,X^{*}(t),\mathbb{E}[\varphi(X^{*}(t))],u^\theta(t),\alpha(t))\notag\\
		&- b(t,X^{*}(t),\mathbb{E}[\varphi(X^{*}(t))],u^{*}(t),\alpha(t))\}\mathrm{d}t + \{\sigma_x(t)X_1(t)\notag\\
		&+ \sigma_y(t)\mathbb{E}[\varphi_x(X(t))X_1(t)] + \sigma(t,X^{*}(t),\mathbb{E}[\varphi(X^{*}(t))],u^\theta(t),\alpha(t)) \notag\\
		&- \sigma(t,X^{*}(t),\mathbb{E}[\varphi(X^{*}(t))],u^{*}(t),\alpha(t))\}\mathrm{d}B(t) + \{\gamma_x(t)X_1(t) \\
		&+ \gamma_y(t)\mathbb{E}[\varphi_x(X(t))X_1(t)] + \gamma(t,X^{*}(t),\mathbb{E}[\varphi(X^{*}(t))],u^\theta(t),\alpha(t))\notag\\
		&- \gamma(t,X^{*}(t),\mathbb{E}[\varphi(X^{*}(t))],u^{*}(t),\alpha(t))\}\mathrm{d}\tilde{\Phi}(t),\notag\\
		X_{1}(0) =& 0,\notag
	\end{align}

	\begin{align}
		\label{eqn:man6secondordervariationalequation}
		\mathrm{d}X_2(t) =& \{b_x(t,X^{*}(t),\mathbb{E}[\varphi(X^{*}(t))],u^\theta(t),\alpha(t))X_{1}(t)- b_x(t,X^{*}(t),\mathbb{E}[\varphi(X^{*}(t))],u^{*}(t),\notag\\
		&\alpha(t))X_1(t) + b_x(t,X^{*}(t),\mathbb{E}[\varphi(X^{*}(t))],u^{*}(t),\alpha(t))X_2(t)+ b_y(t,X^{*}(t),\notag\\
		&\mathbb{E}[\varphi(X^{*}(t))],u^{*}(t),\alpha(t))\mathbb{E}[\varphi_x(X(t))X_2(t)] + \frac{1}{2} b_{xx}(t,X^{*}(t),\mathbb{E}[\varphi(X^{*}(t))],\notag\\
		&u^{*}(t),\alpha(t))X_1^2(t)\}\mathrm{d}t + \{\sigma_x(t,X^{*}(t),\mathbb{E}[\varphi(X^{*}(t))],u^\theta(t),\alpha(t))- \sigma_x(t, X^{*}(t),\notag\\
		&\mathbb{E}[\varphi(X^{*}(t))],u^{*}(t),\alpha(t))X_1(t) + \sigma_x(t,X^{*}(t),\mathbb{E}[\varphi(X^{*}(t))],u^{*}(t),\alpha(t))X_2(t) \notag\\
		&+ \sigma_y(t,X^{*}(t),\mathbb{E}[\varphi(X^{*}(t))],u^{*}(t),\alpha(t))\mathbb{E}[\varphi_x(X(t))X_2(t)]+ \frac{1}{2} \sigma_{xx}(t,X^{*}(t),\notag\\
		&\mathbb{E}[\varphi(X^{*}(t))],u^{*}(t),\alpha(t))\mathbb{E}[\varphi_x(X(t))X_1^2(t)]\}\mathrm{d}B(t)+ \{\gamma_x(t,X^{*}(t), \notag\\
		& \mathbb{E}[\varphi(X^{*}(t))],u^\theta(t),\alpha(t))- \gamma_x(t,X^{*}(t),\mathbb{E}[\varphi(X^{*}(t))],u^{*}(t),\alpha(t))X_1(t) \notag\\
		&+ \gamma_x(t,X^{*}(t),\mathbb{E}[\varphi(X^{*}(t))],u^{*}(t),\alpha(t))X_2(t)+ \gamma_y(t,X^{*}(t),\mathbb{E}[\varphi(X^{*}(t))],\notag\\
		&u^{*}(t),\alpha(t)) \times\mathbb{E}[\varphi_x(X(t))X_2(t)]+ \frac{1}{2} \gamma_{xx}(t,X^{*}(t),\mathbb{E}[\varphi(X^{*}(t))],u^{*}(t),\alpha(t))\notag\\
		&\times\mathbb{E}[\varphi_x(X(t))X_1^2(t)]\}\mathrm{d}\tilde{\Phi}(t),\, \, \, 
		X_{2}(0) = 0.
	\end{align}

\end{lemma}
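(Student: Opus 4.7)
The plan is to set $R(t) := X^{\theta,\xi^{*}}(t)-X^{*}(t)-X_1(t)-X_2(t)$ and derive an SDE for $R$ by subtracting the dynamics of $X^{\theta,\xi^{*}}$, $X^{*}$, $X_1$ and $X_2$, then control $\mathbb{E}[\sup_{t}|R(t)|^2]$ via It\^o/BDG/Gr\"onwall. Before attacking $R$, I would record the standard a priori moment estimates, obtained by applying It\^o to $|X_1|^{2k}$ and $|X_2|^2$ together with Burkholder--Davis--Gundy on the $\mathrm{d}B$ and $\mathrm{d}\tilde{\Phi}$ integrals and Gr\"onwall: because the ``forcing'' terms in \eqref{eqn:man6firstodervariationalequation} (the $b(\cdot,u^\theta)-b(\cdot,u^{*})$ type terms) are supported on $E_\theta$ of Lebesgue measure $\theta$, one gets $\mathbb{E}[\sup_t|X_1(t)|^{2k}]=O(\theta^k)$ for $k=1,2$, and then from \eqref{eqn:man6secondordervariationalequation} one gets $\mathbb{E}[\sup_t|X_2(t)|^2]=O(\theta^2)$. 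A parallel estimate gives $\mathbb{E}[\sup_t|X^{\theta,\xi^{*}}(t)-X^{*}(t)|^{2k}]=O(\theta^k)$.

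Next, I would expand each coefficient of $X^{\theta,\xi^{*}}-X^{*}$ around $(X^{*},\mathbb{E}[\varphi(X^{*})],u^{*})$ using a Taylor formula with integral remainder in the $(x,y)$-variables (Assumption $(\mathcal{C}3)$), while handling the discrete jump in the control separately via the ``$\mu$-subscripted'' notation $b_x^\mu, \sigma_x^\mu, \gamma_x^\mu$ already introduced. Substituting $X^{\theta,\xi^{*}}-X^{*}=X_1+X_2+R$ and matching the linear-in-$(X_1,X_2)$ parts against \eqref{eqn:man6firstodervariationalequation}--\eqref{eqn:man6secondordervariationalequation} and the quadratic-in-$X_1$ part against the Hessian terms in $X_2$, one obtains
\begin{align*}
\mathrm{d}R(t) = \bigl\{b_x(t,e_i)R(t)+b_y(t,e_i)\mathbb{E}[\varphi_x(X^{*}(t))R(t)]+\Lambda^b(t)\bigr\}\mathrm{d}t+\Lambda^\sigma(t)\mathrm{d}B(t)+\Lambda^\gamma(t)\mathrm{d}\tilde{\Phi}(t),
\end{align*}
with $R(0)=0$, and where each residual $\Lambda^{\bullet}$ is a sum of three families of terms: (i) $(b_x^\mu-b_x)(X_1+X_2)$ and analogues for $\sigma,\gamma$, evaluated at a convex combination of $X^{*}$ and $X^{\theta,\xi^{*}}$; (ii) second-order Taylor remainders of the form $\int_0^1(1-\mu)(b_{xx}^\mu-b_{xx})(X^{\theta,\xi^{*}}-X^{*})^2\mathrm{d}\mu$ and their mean-field counterparts with $\varphi_x(X)$; (iii) the genuine spike contribution $\mathbf 1_{E_\theta}\bigl(b(t,X^{*},\cdot,u^\theta)-b(t,X^{*},\cdot,u^{*})\bigr)$, which is absorbed exactly by the corresponding terms in the $X_1$-equation so that only a $(b_x(t,u^\theta)-b_x(t,u^{*}))\mathbf 1_{E_\theta}X_2$-type residue survives.

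Applying It\^o to $|R(t)|^2$, taking expectations and using BDG on the martingale parts yields
\begin{equation*}
\mathbb{E}\Bigl[\sup_{s\le t}|R(s)|^2\Bigr]\le C\int_0^t\mathbb{E}[|R(s)|^2]\mathrm{d}s+C\,\mathbb{E}\Bigl[\int_0^T\bigl(|\Lambda^b(s)|^2+|\Lambda^\sigma(s)|^2+\|\Lambda^\gamma(s)\|_{\mathcal{M}^2}^2\bigr)\mathrm{d}s\Bigr],
\end{equation*}
and Gr\"onwall then reduces the whole problem to bounding the residual term by $C\theta^2\phi(\theta)$ with $\phi(\theta)\to 0$. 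For family (i), the factor $b_x^\mu-b_x(t,e_i)$ is evaluated along a path converging in $L^2$ to $X^{*}$, hence tends to $0$ in $L^2(\mathrm{d}t\otimes\mathrm{d}\mathbb{P})$ by the dominated convergence theorem and continuity of $b_x$, which combined with $\|X_1+X_2\|_{L^4}=O(\sqrt\theta)$ gives $o(\theta)\cdot O(\theta)=\theta^2\phi(\theta)$; family (ii) is handled identically, using $\|X^{\theta,\xi^{*}}-X^{*}\|_{L^4}^2=O(\theta)$ together with the $L^2$-convergence of $b_{xx}^\mu-b_{xx}$ to $0$; family (iii) is supported on $E_\theta$ and involves $X_2$, so it contributes $|E_\theta|\cdot\sup_t\mathbb{E}|X_2|^2=\theta\cdot O(\theta^2)=o(\theta^2)$.

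The main obstacle I expect is the bookkeeping for family (ii), because the second-order Taylor remainder is not only in the $x$-variable but also in the mean-field argument $\mathbb{E}[\varphi(X)]$, so one picks up cross terms $b_{xy}(X^{\theta,\xi^{*}}-X^{*})\mathbb{E}[\varphi_x(\cdot)(X^{\theta,\xi^{*}}-X^{*})]$ whose control requires Fubini to move the expectation outside, together with the Lipschitz bound on $\varphi_x$ from $(\mathcal{C}2)$ to reduce to the scalar $L^4$-estimate above; the regime-switching jump part $\mathrm{d}\tilde\Phi$ adds the complication that the quadratic-variation contribution $\sum_j(\gamma_x^j)^2\zeta_{ij}$ must be tracked consistently with the structure of $\|\cdot\|_{\mathcal{M}^2}$, but this is routine once BDG for the compensated jump martingale is invoked. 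Modulo these bookkeeping issues, the argument is a standard second-order Peng-type expansion adapted to the mean-field/regime-switching/singular setting; the singular control $\xi^{*}$ enters only through the common term $G(t,\alpha(t-))\mathrm{d}\xi^{*}(t)$, which cancels in the subtraction defining $R$ and therefore plays no role in the estimate.
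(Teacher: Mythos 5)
Your strategy coincides with the paper's own: the paper proves this exact estimate (restated verbatim as Lemma \ref{lem:man6lemma5}) in Appendix \ref{sec:proofofauxi} by introducing precisely your remainder process $\bar X=X^{\theta,\xi^{*}}-X^{*}-X_1-X_2$, observing that the singular term $\int_0^{\cdot}G(s,\alpha(s-))\,\mathrm{d}\xi^{*}(s)$ is common to $X^{\theta,\xi^{*}}$ and $X^{*}$ and therefore cancels, writing $\mathrm{d}\bar X=I_1^{\theta}\mathrm{d}t+I_2^{\theta}\mathrm{d}B+\sum_jI_3^{\theta}\mathrm{d}\tilde\Phi_j$ with the same three families of residuals you identify, and then deferring the estimation of the residuals to \cite[Proposition 4.3]{zhang2018general}. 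Your a priori moment bounds $\mathbb{E}[\sup_t|X_1|^{2k}]=O(\theta^k)$, $\mathbb{E}[\sup_t|X_2|^{2}]=O(\theta^{2})$, and the It\^o/BDG/Gr\"onwall scheme are the standard Peng second-order machinery and are fine.

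There is, however, a concrete gap in the part you dismiss as bookkeeping. The mean-field residuals --- the cross and second-order terms $b_{xy}\,\Delta x\,\Delta y$, $\tfrac12 b_{yy}\,\Delta y^{2}$ with $\Delta x=X^{\theta,\xi^{*}}-X^{*}$, $\Delta y=\mathbb{E}[\varphi(X^{\theta,\xi^{*}})]-\mathbb{E}[\varphi(X^{*})]$, and the mismatch $b_y\bigl(\Delta y-\mathbb{E}[\varphi_x(X^{*})(X_1+X_2)]\bigr)$ --- are \emph{not} controlled by Fubini, the Lipschitz bound on $\varphi_x$, and the estimate $\|\Delta x\|_{L^4}^{2}=O(\theta)$ alone: these tools give only $|\Delta y|\le C\,\mathbb{E}|\Delta x|=O(\sqrt\theta)$, hence a contribution of order $O(\theta^{2})$ to $\mathbb{E}[\sup_t|R(t)|^{2}]$ \emph{without} the factor $\phi(\theta)\downarrow 0$. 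The little-$o$ is not cosmetic: a remainder that is merely $O(\theta)$ in $L^{2}$ leaves a non-vanishing error in the variational inequality after dividing by $\theta$. What delivers the $\phi(\theta)$ is the refined fact that $X_1$, being driven by spike terms, satisfies $\int_0^T\bigl(\mathbb{E}[\Psi(t)X_1(t)]\bigr)^{2}\mathrm{d}t\le\theta\phi(\theta)$ and $\sup_t|\mathbb{E}[\varphi_x(X(t))X_1(t)]|^{2}\le\theta\hat\rho(\theta)$ --- exactly Lemmas \ref{lem:man6lemma3} and \ref{lem:man6lemma4}, which the paper states immediately before this estimate for this purpose and which your argument never invokes. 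With these, $\Delta y=o(\sqrt\theta)$ and the mean-field terms fall inside the $\theta^{2}\phi(\theta)$ budget; without them your argument proves only $\mathbb{E}[\sup_t|R(t)|^{2}]\le C\theta^{2}$, which is strictly weaker than the claim.
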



\begin{lemma}
\label{lem:man6lemma3}
Under Assumption \ref{assumpt:man6conditionsforexistence}, let $\{\Psi(t)\ | \ t\in[0,T] \}$ be a process such that for any $p\geq 1$ 
\begin{equation*}
	\mathbb{E}[\sup_{t\in[0,T]}|\Psi(t)|^p] \le C.
\end{equation*}
	Then there exists $\phi: [0,\infty) \rightarrow [0,\infty)$ with $\phi(\theta)\downarrow 0$ as $\theta\downarrow 0$ such that 
	\begin{equation}
		\label{eqn:man6NO}
		\int_0^T(\mathbb{E}[\Psi(t)X_1(t)])^2\mathrm{d}t \le \theta \phi(\theta) 
	\end{equation} 

\end{lemma}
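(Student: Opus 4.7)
The plan is to combine the standard spike-variation moment bounds on $X_1$ with a variation-of-constants / duality argument, producing a pointwise bound $|\mathbb{E}[\Psi(t)X_1(t)]|\le C\theta$, from which squaring and integrating yields the claim. First I would apply It\^o's formula to $|X_1|^{2p}$ in the linear SDE \eqref{eqn:man6firstodervariationalequation} and, using $(\mathcal{C}1),(\mathcal{C}4)$, Burkholder-Davis-Gundy on the $dB$- and $d\tilde\Phi$-integrals, and Gronwall to close, establish
\[
\mathbb{E}\Bigl[\sup_{t\in[0,T]}|X_1(t)|^{2p}\Bigr]\le C_p\,\theta^p, \qquad p\ge 1.
\]
The $\theta^p$ scaling comes from the fact that the spike quantities $\delta b,\delta\sigma,\delta\gamma$ are bounded and supported on $E_\theta$ with $|E_\theta|=\theta$. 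A direct application of Cauchy-Schwarz then gives $(\mathbb{E}[\Psi X_1])^2 \le C\theta\,\mathbb{E}[\Psi^2]$, and hence the crude bound $\int_0^T(\mathbb{E}[\Psi X_1])^2\,dt\le CT\theta$. This alone realises only $\phi\equiv CT$, not $\phi\downarrow 0$, so a refinement is needed.

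To obtain the sharper rate I would use variation of constants: let $\Gamma(t,s)$ denote the fundamental solution of the linear homogeneous part of \eqref{eqn:man6firstodervariationalequation}, so that
\[
X_1(t)=\int_0^t\Gamma(t,s)\bigl[\delta b(s)\,ds+\delta\sigma(s)\,dB(s)+\delta\gamma(s)\,d\tilde\Phi(s)\bigr],
\]
where the integrands vanish off $E_\theta$. Replacing $\Psi(t)$ by $\mathbb{E}[\Psi(t)\mid\mathcal{F}_t]$ (which leaves $\mathbb{E}[\Psi X_1]$ unchanged since $X_1(t)$ is $\mathcal{F}_t$-measurable), I may assume $\Psi$ is adapted. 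The drift contribution to $\mathbb{E}[\Psi(t)X_1(t)]$ is bounded by $C|E_\theta|=C\theta$ directly via Cauchy-Schwarz in $\omega$. For the two stochastic-integral contributions, I would transfer the expectation onto the integrands $\Gamma(t,s)\delta\sigma(s)$ and $\Gamma(t,s)\delta\gamma(s)$ by integration by parts against the martingale representation of $\Psi(t)\Gamma(t,\cdot)$ (or, equivalently, Clark-Ocone applied to a Malliavin-smooth approximant of $\Psi$); since each integrand is supported on $E_\theta$, this again produces a factor $\theta$. Combining gives $|\mathbb{E}[\Psi(t)X_1(t)]|\le C\theta$, hence $\int_0^T(\mathbb{E}[\Psi X_1])^2\,dt\le CT\theta^2=\theta\,\phi(\theta)$ with $\phi(\theta)=CT\theta\downarrow 0$.

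The hard part will be the duality step: Cauchy-Schwarz alone produces only the $O(\sqrt\theta)$ pointwise rate of $\mathbb{E}[\Psi X_1]$, whereas obtaining the genuine $O(\theta)$ rate requires exploiting the cancellation of martingale expectations against an adapted $\Psi$. Because the hypothesis on $\Psi$ gives only uniform $L^p$-integrability and no Malliavin smoothness, the integration by parts must be carried out on a dense class of approximants $\Psi_n\to\Psi$ and then stabilised by a limiting argument in which the approximation error is balanced against the $\sqrt\theta$ from the crude bound. Controlling this tradeoff uniformly in $t\in[0,T]$ is the key technical difficulty.
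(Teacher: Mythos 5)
Your opening estimate $\mathbb{E}[\sup_{t}|X_1(t)|^{2p}]\le C_p\theta^p$ and your observation that Cauchy--Schwarz alone only yields $\phi\equiv CT$ are both correct, and they match the starting point of the argument in \cite[Lemma 4.2]{zhang2018general}, to which the paper simply defers. The refinement you propose, however, has two genuine gaps. First, the variation-of-constants representation $X_1(t)=\int_0^t\Gamma(t,s)\bigl[\delta b\,\mathrm{d}s+\delta\sigma\,\mathrm{d}B+\delta\gamma\,\mathrm{d}\tilde\Phi\bigr]$ is not available here: the variational equation \eqref{eqn:man6firstodervariationalequation} contains the nonlocal terms $b_y(t)\mathbb{E}[\varphi_x(X(t))X_1(t)]$, $\sigma_y(t)\mathbb{E}[\varphi_x(X(t))X_1(t)]$ and $\gamma_y(t)\mathbb{E}[\varphi_x(X(t))X_1(t)]$, which cannot be absorbed into a pathwise fundamental solution $\Gamma(t,s)$. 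If you carry them as extra forcings, all you know a priori is that $\mathbb{E}[\varphi_x(X)X_1]=O(\sqrt\theta)$ (it is itself of the form $\mathbb{E}[\Psi X_1]$, since $\varphi_x$ is bounded), so the argument must be closed by a Gronwall-type bootstrap on $\int_0^{\cdot}(\mathbb{E}[\Psi X_1])^2\,\mathrm{d}t$ itself; this interaction with Lemma \ref{lem:man6lemma4} is exactly the role of the differentiability and boundedness of $\varphi_x$ that the paper's proof flags, and your sketch does not address it.

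Second, and more seriously, the pointwise bound $|\mathbb{E}[\Psi(t)X_1(t)]|\le C\theta$ that your duality step is meant to deliver is false under the stated hypotheses, and hence so is your conclusion $\phi(\theta)=CT\theta$. The hypothesis gives only moment bounds on $\Psi$: writing the predictable representation $\mathbb{E}[\Psi(t)\mid\mathcal{F}_t]=\mathbb{E}[\Psi(t)]+\int_0^t g_t(s)\,\mathrm{d}B(s)+\cdots$, the Brownian spike contributes $\int_{E_\theta\cap[0,t]}\mathbb{E}[g_t(s)\Gamma(t,s)\delta\sigma(s)]\,\mathrm{d}s$, and the kernel $g_t$ is merely square-integrable in $s$; when $E_\theta$ abuts $t$ this quantity can be of order $\sqrt\theta$ rather than $\theta$, and no approximation by Malliavin-smooth $\Psi_n$ restores a uniform $O(\theta)$ rate. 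What is true --- and is all that \eqref{eqn:man6NO} asserts --- is the integrated $o(\theta)$ bound: Cauchy--Schwarz in $s$ gives $(\mathbb{E}[\Psi(t)\int_{E_\theta}g_t\,\mathrm{d}B])^2\le C\theta\int_{E_\theta}\mathbb{E}[g_t(s)^2]\,\mathrm{d}s$, and after integrating in $t$ one uses the absolute continuity of the integral of the $L^1$ function $s\mapsto\int_0^T\mathbb{E}[g_t(s)^2]\,\mathrm{d}t$ over sets of measure $\theta$; this yields a $\phi(\theta)\downarrow 0$ with no rate, which is precisely why the lemma is stated with an unspecified $\phi$. Your closing paragraph correctly names this as the key difficulty, but naming it is not resolving it: as written, the proposal establishes only the crude $O(\theta)$ bound.
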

\begin{proof}
The proof proceeds as in \cite[Lemma 4.2]{zhang2018general}, using the differentiability of $\varphi$ and the boundedness of its derivative.
	\end{proof}

\begin{lemma}
\label{lem:man6lemma4}
Under Assumption \ref{assumpt:man6conditionsforexistence}, we have
\begin{align}
	\label{eqn:man6numberhash}
	\begin{split}
		&\sup_{t\in [0,T]}|\mathbb{E}[\varphi_x(X(t))X_1(t)]|^2 < \theta \hat{\rho}(\theta), 
	\end{split}
\end{align}
where $\hat{\rho}: (0,\infty)\rightarrow (0,\infty)$ is such that $\hat{\rho} \downarrow 0$ as $\theta \downarrow 0$.
\end{lemma}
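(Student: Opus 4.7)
The plan is to upgrade the $L^2$-in-time bound provided by Lemma \ref{lem:man6lemma3} to a uniform pointwise estimate for the deterministic map $F(t) := \mathbb{E}[\varphi_x(X(t))X_1(t)]$. I will do this by deriving an explicit integral equation for $F$ and then applying Gronwall's inequality, using Lemma \ref{lem:man6lemma3} to handle the nontrivial forcing terms.

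First, I apply It\^o's product rule to $\varphi_x(X(t))X_1(t)$, using the state dynamics \eqref{eqn:man6meanfieldcontrolledstate} for $X=X^{u^*,\xi^*}$ together with the variational SDE \eqref{eqn:man6firstodervariationalequation} for $X_1$. When $\varphi$ lacks the $C^2$ regularity that It\^o's formula requires, I mollify $\varphi$ by a sequence $\varphi^{\varepsilon}\in C^{\infty}_{b}$ converging to $\varphi$ in $C^{1}_{\mathrm{loc}}$, derive the estimate with constants independent of $\varepsilon$, and pass to the limit. Taking expectations to kill the $\mathrm{d}B$ and $\mathrm{d}\tilde{\Phi}$ martingale contributions produces an integral equation of the form
\[
F(t) = \int_{0}^{t}\Bigl(A(s)F(s) + \sum_{i}\mathbb{E}[\Psi_{i}(s)X_{1}(s)] + R(s)\mathbf{1}_{E_{\theta}}(s)\Bigr)\mathrm{d}s + \int_{0}^{t}\mathbb{E}[\Psi_{0}(s)X_{1}(s)]\,\mathrm{d}\xi^{*}(s),
\]
where $A$ is a bounded deterministic coefficient, $R$ is bounded uniformly in $\theta$ and encodes the spike-variation differences $\delta b,\delta\sigma,\delta\gamma$, and each $\Psi_{i}$ is an adapted process with bounded moments of all orders, built from the bounded derivatives of $b,\sigma,\gamma,\varphi$ evaluated along the optimal trajectory.

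Second, I estimate the right-hand side. Cauchy--Schwarz combined with Lemma \ref{lem:man6lemma3} applied to each admissible $\Psi_{i}$ gives
\[
\int_{0}^{T}|\mathbb{E}[\Psi_{i}(s)X_{1}(s)]|\,\mathrm{d}s \le \sqrt{T}\,\sqrt{\theta\,\phi(\theta)},
\]
and an entirely parallel argument (with $\mathrm{d}\xi^{*}$ in place of $\mathrm{d}s$ in the proof of Lemma \ref{lem:man6lemma3}, using $\mathbb{E}[\xi^{*}(T)^{2}]<\infty$) produces the same bound for the $\mathrm{d}\xi^{*}$ integral. The pure spike contribution satisfies $\int_{0}^{T}|R(s)|\mathbf{1}_{E_{\theta}}(s)\,\mathrm{d}s \le C\theta$. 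Gronwall's inequality applied to the integral equation then yields
\[
\sup_{t\in[0,T]}|F(t)| \le C\bigl(\theta + \sqrt{\theta\,\phi(\theta)}\bigr),
\]
so that $\sup_{t\in[0,T]}|F(t)|^{2} \le 2C^{2}(\theta + \phi(\theta))\,\theta =: \theta\,\hat{\rho}(\theta)$, and $\hat{\rho}(\theta)\downarrow 0$ as $\theta\downarrow 0$ because $\phi(\theta)\downarrow 0$ by Lemma \ref{lem:man6lemma3}.

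The principal obstacle is the It\^o expansion of $\varphi_{x}(X)$ when $\varphi$ is merely $C^{1}$. Handling this requires mollifying $\varphi$ so that the second-derivative-type processes $\Psi_{i}^{\varepsilon}$ appearing in the integral equation remain uniformly bounded in the moments needed to invoke Lemma \ref{lem:man6lemma3}, and the estimates must then be passed to the limit by dominated convergence. If $\varphi$ is tacitly assumed to be $C^{1,1}$ (or $C^{2}$), this obstacle vanishes and the argument above applies directly.
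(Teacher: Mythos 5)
Your overall strategy---turn $F(t):=\mathbb{E}[\varphi_x(X(t))X_1(t)]$ into the solution of a deterministic integral equation by taking expectations to kill the martingale parts, bound the forcing via Lemma \ref{lem:man6lemma3} and Cauchy--Schwarz, and close with Gronwall---is exactly the route the paper points to: its proof is a one-line citation of \cite[Proposition 4.3]{zhang2018general}, where this scheme is carried out for $\mathbb{E}[X_1(t)]$, i.e.\ for $\varphi=\mathrm{id}$. However, the two features that distinguish the present lemma from that cited result are precisely where your argument has gaps. First, the regularity bookkeeping: to obtain a semimartingale decomposition of $\varphi_x(X(t))$ you must apply It\^o's formula to $g(X)$ with $g=\varphi_x$, and the second-order term $\tfrac12 g''(X)\sigma^2\,\mathrm{d}t$ involves $\varphi_{xxx}$, not merely $\varphi_{xx}$. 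So the forcing processes $\Psi_i$ to which you want to apply Lemma \ref{lem:man6lemma3} contain \emph{third} derivatives of $\varphi$, and your claim that $C^{1,1}$ or $C^{2}$ regularity removes the obstacle undercounts by one derivative; under the paper's actual hypothesis ($\varphi\in C^{1}$ with bounded derivative, Assumption $(\mathcal{C}2)$), mollification does not produce uniformly bounded $\varphi^{\varepsilon}_{xx}$, let alone $\varphi^{\varepsilon}_{xxx}$, and the limit passage you describe cannot be carried out.

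Second, the singular-control term. Since $X=X^{u^{*},\xi^{*}}$ carries the forcing $G\,\mathrm{d}\xi^{*}$, the product expansion produces a contribution of the form $\mathbb{E}\big[\int_0^t X_1(s)\varphi_{xx}(X(s))G(s)\,\mathrm{d}\xi^{*}(s)\big]$. This is \emph{not} of the form $\int_0^t\mathbb{E}[\Psi_0(s)X_1(s)]\,\mathrm{d}\xi^{*}(s)$ as you wrote it ($\xi^{*}$ is random, so the expectation cannot sit inside the integrand while the integrator stays outside), and it is not an object Lemma \ref{lem:man6lemma3} addresses: that lemma bounds $\int_0^T(\mathbb{E}[\Psi(t)X_1(t)])^2\,\mathrm{d}t$ against Lebesgue measure. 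The naive bound for the $\xi^{*}$-term, $\|\varphi_{xx}\|_\infty\|G\|_\infty(\mathbb{E}[\sup_t|X_1(t)|^2])^{1/2}(\mathbb{E}[\xi^{*}(T)^2])^{1/2}$, is only $O(\sqrt{\theta})$, because $\mathbb{E}[\sup_t|X_1(t)|^2]=O(\theta)$ is sharp (the diffusion forcing $\delta\sigma\,\mathbbm{1}_{E_{\theta}}\,\mathrm{d}B$ contributes at exactly this order); an $O(\sqrt{\theta})$ forcing fed into Gronwall yields only $\sup_t|F(t)|^2\le C\theta$, not $\theta\hat{\rho}(\theta)$ with $\hat{\rho}\downarrow 0$. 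Your assertion that an ``entirely parallel'' $\mathrm{d}\xi^{*}$-version of Lemma \ref{lem:man6lemma3} exists is the missing idea and would need a genuine proof (for instance, a decomposition of $X_1$ isolating the part driven on $E_{\theta}$ together with a separate treatment of its interaction with $\xi^{*}$). To be fair, the paper's own citation does not confront either difficulty, since the cited proposition concerns $\mathbb{E}[X_1(t)]$, for which no It\^o expansion of $\varphi_x(X)$ and no $\mathrm{d}\xi^{*}$ term arise; but as written your proof does not close under the stated assumptions.
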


\begin{proof}
The proof follows as in \cite[Proposition 4.3]{zhang2018general}.
\end{proof}
\begin{lemma}
\label{lem:man6lemma5}
Under Assumption \ref{assumpt:man6conditionsforexistence}, it holds
\begin{equation}
	\mathbb{E}[\sup_{t\in [0,T]}|X^{(\theta,\xi^{*})}(t)-X^{*}(t)-X_1(t) - X_2(t)|^2] \le C \theta^2 \bar{\rho}(\theta), 
\end{equation}
where $\bar{\rho} : (0,\infty) \rightarrow (0,\infty)$ satisfies $\bar{\rho}(\theta)\downarrow 0$ as $\theta\downarrow 0$.
\end{lemma}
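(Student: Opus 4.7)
The plan is to set $R^{\theta}(t) := X^{\theta,\xi^{*}}(t) - X^{*}(t) - X_{1}(t) - X_{2}(t)$, derive an SDE for $R^{\theta}$, and then apply the Burkholder--Davis--Gundy (BDG) and Gronwall inequalities. First I would subtract the SDEs for $X^{\theta,\xi^{*}}$, $X^{*}$, $X_{1}$ and $X_{2}$ (equations \eqref{eqn:man6firstodervariationalequation} and \eqref{eqn:man6secondordervariationalequation}) to obtain
\begin{align*}
\mathrm{d}R^{\theta}(t) = \Lambda^{b}(t)\,\mathrm{d}t + \Lambda^{\sigma}(t)\,\mathrm{d}B(t) + \Lambda^{\gamma}(t)\,\mathrm{d}\tilde{\Phi}(t),\qquad R^{\theta}(0)=0,
\end{align*}
where each $\Lambda^{\bullet}$ is the residual obtained after writing a second-order Taylor expansion of the coefficients $b,\sigma,\gamma$ at $(X^{*}(t),\mathbb{E}[\varphi(X^{*}(t))])$ using the integral remainder form with the $b_{x}^{\mu},\sigma_{x}^{\mu},\gamma_{x}^{\mu}$ notation introduced in the paper, and substituting the decomposition $X^{\theta,\xi^{*}}-X^{*}=X_{1}+X_{2}+R^{\theta}$.

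After expansion and cancellation against the defining equations of $X_{1}$ and $X_{2}$, each $\Lambda^{\bullet}(t)$ splits into three classes: (i) terms linear in $R^{\theta}(t)$ or in $\mathbb{E}[\varphi_{x}(X(t))R^{\theta}(t)]$ with bounded coefficients, which feed the Gronwall loop; (ii) ``coefficient-drift'' terms of the form $\bigl(b_{x}^{\mu}(t,\alpha(t))-b_{x}(t,e_{i})\bigr)X_{1}(t)$ and its analogues for $\sigma,\gamma$, which are $o(1)$ pointwise thanks to continuity of the second derivatives on the spike set $E_{\theta}$ combined with $|E_{\theta}|=\theta$; and (iii) genuine Taylor remainders involving $X_{1}^{2}$, $X_{1}X_{2}$, $X_{2}^{2}$ and mean-field analogues such as $\mathbb{E}[\varphi_{x}(X)X_{1}]^{2}$. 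To control class (iii) I would use the standard moment bounds $\mathbb{E}[\sup_{t}|X_{1}(t)|^{p}]\le C\theta^{p/2}$ and $\mathbb{E}[\sup_{t}|X_{2}(t)|^{p}]\le C\theta^{p}$ (obtained as in Lemma \ref{lem:man6lemma1}), together with Lemmas \ref{lem:man6lemma3} and \ref{lem:man6lemma4}, which precisely upgrade the crude $O(\theta)$ bounds on the mean-field cross-terms to $\theta\phi(\theta)$ and $\theta\hat{\rho}(\theta)$.

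Then the plan is to take $\sup_{s\le t}$, square, take expectation, and apply BDG to the Brownian and compensated Markov-chain martingale integrals (using the norm $\|\cdot\|_{\mathcal{M}^{2}}$ to handle $\tilde{\Phi}$), which yields an estimate of the shape
\begin{align*}
\mathbb{E}\Bigl[\sup_{s\le t}|R^{\theta}(s)|^{2}\Bigr] \le C\int_{0}^{t}\mathbb{E}\Bigl[\sup_{r\le s}|R^{\theta}(r)|^{2}\Bigr]\mathrm{d}s + C\theta^{2}\bar{\rho}(\theta),
\end{align*}
where $\bar{\rho}(\theta)\downarrow 0$ is built from the moduli $\phi$ and $\hat{\rho}$ of Lemmas \ref{lem:man6lemma3}--\ref{lem:man6lemma4} together with a dominated-convergence argument on the coefficient-drift terms of class (ii). Gronwall's lemma then gives the desired bound.

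The main obstacle will be the careful accounting of class (iii), in particular keeping the mean-field remainders quadratic in the appropriate $L^{2}$-sense: naive Cauchy--Schwarz on $\mathbb{E}[\varphi_{x}(X(t))X_{1}(t)]$ only produces $C\theta$, which is insufficient, and one has to invoke Lemma \ref{lem:man6lemma4} exactly to recover the extra $\hat{\rho}(\theta)$ factor; similarly, the $X_{1}X_{2}$ cross-term must be handled by combining the $\theta^{1/2}$ bound on $X_{1}$ with the $\theta$ bound on $X_{2}$. Handling the jump integral against $\tilde{\Phi}$ adds another layer: one must use the quadratic variation $\sum_{j}|\gamma^{j}|^{2}\zeta_{ij}$ rather than $\mathrm{d}t$, but this is routine once the $\mathcal{M}^{2}$-framework of Section \ref{sec:man6problemformulation} is used throughout.
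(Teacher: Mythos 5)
Your proposal is correct and follows essentially the same route as the paper: the paper likewise forms the residual process $\bar{X}(t)=X^{\theta,\xi^{*}}(t)-X^{*}(t)-X_{1}(t)-X_{2}(t)$ (after explicitly stripping the common singular integral $\int_{0}^{\cdot}G(s,\alpha(s-))\,\mathrm{d}\xi^{*}(s)$, which cancels exactly as you observe), writes its SDE with Taylor-remainder coefficients $I_{1}^{\theta}(t;b)$, $I_{2}^{\theta}(t;\sigma)$, $I_{3}^{\theta}(t;\gamma^{j})$, and then defers the BDG--Gronwall estimate to \cite[Proposition 4.3]{zhang2018general}. Your write-up simply supplies the details behind that citation, including the correct use of Lemmas \ref{lem:man6lemma3}--\ref{lem:man6lemma4} to upgrade the mean-field cross-terms beyond the naive $O(\theta)$ bound.
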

\begin{proof}
	See Appendix \ref{sec:proofofauxi}.
	\end{proof}
Let $F^{\theta}(T;h)$ be defined by
\begin{align*}
F^{\theta}(T;h) &:= h(X^{\theta,\xi^{*}}(T), \mathbb{E}[\varphi(X^{\theta,\xi^{*}}(T))],\alpha(T)) - h(X^{*}(T), \mathbb{E}[\varphi(X^{*}(T))],\alpha(T))\\
&+ h_{x}(T)(X_{1}(T) + X_{2}(T)) - h_{y}(T)\mathbb{E}[\varphi_{x}(\cdot)(X_{1}(T) + X_{2}(T))] \\
&- \frac{1}{2}h_{xx}(T)X_{1}^{2}(T).
\end{align*}
Then we have the following estimate
\begin{proposition}	
\label{prop:man6propositionpage201}
\begin{align*}
	\mathbb{E}[| F^{\theta}(T;h)|] \leq \theta\bar{\bar{\rho}}(\theta) 	\text{ and } \bar{\bar{\rho}}(\theta) \to 0 \text{ as } \theta \downarrow 0
\end{align*}

\end{proposition}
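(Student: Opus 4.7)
The plan is to apply a second-order Taylor expansion of $h(\cdot,\cdot,\alpha(T))$ about $(X^{*}(T), \mathbb{E}[\varphi(X^{*}(T))])$ in its first two arguments and dominate every resulting residual by the moment estimates from Lemmas \ref{lem:man6lemma2}--\ref{lem:man6lemma5}. Setting $\Delta := X^{\theta,\xi^{*}}(T) - X^{*}(T)$, $R := \Delta - X_{1}(T) - X_{2}(T)$ and $\Lambda := \mathbb{E}[\varphi(X^{\theta,\xi^{*}}(T))] - \mathbb{E}[\varphi(X^{*}(T))]$, Assumption $(\mathcal{C}3)$ yields
\begin{align*}
h(X^{\theta,\xi^{*}}(T),\mathbb{E}[\varphi(X^{\theta,\xi^{*}}(T))],\alpha(T)) &- h(X^{*}(T),\mathbb{E}[\varphi(X^{*}(T))],\alpha(T)) \\
&= h_{x}(T)\Delta + h_{y}(T)\Lambda + \tfrac{1}{2}h_{xx}(T)\Delta^{2} \\
&\quad+ h_{xy}(T)\Delta\Lambda + \tfrac{1}{2}h_{yy}(T)\Lambda^{2} + \mathcal{T}(\theta),
\end{align*}
where $\mathcal{T}(\theta)$ is the third-order Lagrange remainder. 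Subtracting the principal terms that make up $F^{\theta}(T;h)$ then reduces the problem to bounding in $L^{1}$ the five residuals (i) $h_{x}(T)R$, (ii) $h_{y}(T)\bigl[\Lambda - \mathbb{E}[\varphi_{x}(X^{*}(T))(X_{1}(T)+X_{2}(T))]\bigr]$, (iii) $\tfrac{1}{2}h_{xx}(T)(\Delta^{2}-X_{1}^{2}(T))$, (iv) $h_{xy}(T)\Delta\Lambda + \tfrac{1}{2}h_{yy}(T)\Lambda^{2}$, and (v) $\mathcal{T}(\theta)$.

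For (i), Cauchy--Schwarz combined with Lemma \ref{lem:man6lemma5} and the polynomial growth of $h_{x}$ from $(\mathcal{C}4)$ yields $\mathbb{E}[|h_{x}(T)R|] \le C\theta\sqrt{\bar{\rho}(\theta)}$. For (iii), expanding $\Delta^{2}-X_{1}^{2} = 2X_{1}(X_{2}+R) + (X_{2}+R)^{2}$ and using the $L^{2}$-estimates $\mathbb{E}[X_{1}(T)^{2}] = O(\theta)$ and $\mathbb{E}[X_{2}(T)^{2}] = O(\theta^{2})$, obtained from \eqref{eqn:man6firstodervariationalequation}--\eqref{eqn:man6secondordervariationalequation} by standard It\^{o}--Gronwall arguments, together with Lemma \ref{lem:man6lemma5}, Cauchy--Schwarz yields an $o(\theta)$ bound. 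The third-order remainder (v) is handled in the same spirit: $\mathbb{E}[|\Delta|^{3}] = o(\theta)$ (obtained from $\|\Delta\|_{L^{2}} = O(\sqrt{\theta})$ combined with higher-moment bounds under $(\mathcal{C}1)$) together with the polynomial growth of the third-order differences of $h$.

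The delicate part, which I expect to be the main obstacle, is the treatment of the mean-field residuals (ii) and (iv), since $(\mathcal{C}2)$ only guarantees $\varphi \in C^{1}$ with bounded derivative. For (ii), a first-order Taylor expansion of $\varphi$ gives
\[
\Lambda = \mathbb{E}\bigl[\varphi_{x}(X^{*}(T))(X_{1}(T)+X_{2}(T))\bigr] + \mathbb{E}\bigl[\varphi_{x}(X^{*}(T))R(T)\bigr] + \mathbb{E}\bigl[\eta(\Delta)\Delta\bigr],
\]
where $\eta(\Delta) := \int_{0}^{1}[\varphi_{x}(X^{*}(T)+\lambda\Delta) - \varphi_{x}(X^{*}(T))]\,\mathrm{d}\lambda$. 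The first summand cancels inside (ii); the second is bounded by $C\theta\sqrt{\bar{\rho}(\theta)}$ via Lemma \ref{lem:man6lemma5}, and the third is $o(\theta)$ by the uniform continuity of $\varphi_{x}$ on bounded sets, the convergence $\Delta\to 0$ in $L^{2}$, and dominated convergence. For (iv), Lemma \ref{lem:man6lemma4} together with the preceding expansion and $\mathbb{E}[X_{2}(T)^{2}] = O(\theta^{2})$ delivers the sharp estimate $|\Lambda| = o(\sqrt{\theta})$; since $\|\Delta\|_{L^{2}} = O(\sqrt{\theta})$, both $\mathbb{E}[|\Delta\Lambda|]$ and $\Lambda^{2}$ are $o(\theta)$. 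Assembling all contributions yields the desired inequality with $\bar{\bar\rho}(\theta) := \sqrt{\bar{\rho}(\theta)} + \sqrt{\hat{\rho}(\theta)} + o(1)$, which vanishes as $\theta\downarrow 0$.
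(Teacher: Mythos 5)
Your overall strategy---a second-order Taylor expansion of $h$ in $(x,y)$ combined with the moment estimates of Lemmas \ref{lem:man6lemma2}--\ref{lem:man6lemma5}---is the right one, and it is considerably more explicit than the paper, whose entire proof of this proposition is the sentence ``Similar to the proof of Lemma \ref{lem:man6lemma5}'' (itself deferred to the analogous result in \cite{zhang2018general}, where the mean-field term is $\mathbb{E}[X(t)]$, i.e.\ $\varphi(x)=x$). Residuals (i), (iii) and the remainder term go through essentially as you describe (modulo the minor point below), and your use of Lemma \ref{lem:man6lemma4} to obtain $|\Lambda|=o(\sqrt{\theta})$, hence $\Lambda^{2}=o(\theta)$ and $\mathbb{E}[|\Delta\Lambda|]=o(\theta)$ in residual (iv), is exactly what that lemma is for.

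There is, however, a genuine gap at the step you yourself flag as the main obstacle: the term $\mathbb{E}[\eta(\Delta)\Delta]$ in residual (ii). The argument you give (uniform continuity of $\varphi_{x}$, convergence $\Delta\to0$ in $L^{2}$, dominated convergence) yields $\|\eta(\Delta)\|_{L^{2}}=o(1)$ and therefore only $|\mathbb{E}[\eta(\Delta)\Delta]|\le\|\eta(\Delta)\|_{L^{2}}\,\|\Delta\|_{L^{2}}=o(1)\cdot O(\sqrt{\theta})=o(\sqrt{\theta})$, which is not $o(\theta)$. Moreover, the claimed rate cannot be recovered by a sharper argument: if $\varphi\in C^{2}$ with $\varphi_{xx}\not\equiv0$, then $\mathbb{E}[\eta(\Delta)\Delta]=\tfrac12\mathbb{E}[\varphi_{xx}(X^{*}(T))X_{1}^{2}(T)]+o(\theta)$, which is genuinely of order $\theta$ because $\mathbb{E}[X_{1}^{2}(T)]\sim c\theta$. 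Consequently either $F^{\theta}(T;h)$ must be augmented by the second-order mean-field correction $h_{y}(T)\tfrac12\mathbb{E}[\varphi_{xx}(X^{*}(T))X_{1}^{2}(T)]$ (as in Buckdahn--Djehiche--Li--type expansions for nonlinear $\varphi$), or one must in effect restrict to affine $\varphi$ as in \cite{zhang2018general}. This is arguably a defect of the proposition as stated rather than of your argument alone, but as written your proof asserts an $o(\theta)$ bound that does not follow from the cited ingredients. A secondary and easily repaired point: $(\mathcal{C}3)$ only guarantees $h\in C^{2}$ in $(x,y)$, so the remainder should be written in integral (modulus-of-continuity) form rather than as a third-order Lagrange remainder, which would require $h\in C^{3}$.
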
 
\begin{proof}
See Appendix \ref{sec:proofofauxi}.
\end{proof}
\begin{lemma}
\label{man6lemma**}
	\begin{align*}
		\mathbb{E}[p(T)X_{1}(T)] =& \mathbb{E}[ \int_{0}^{T}-X_{1}(t)( f_{x}(t) + \mathbb{E}[f_{y}(t)]\varphi_{x}(X^{\theta,\xi^{\theta}}(t)) )p(t)\mathrm{d}t]\\
		&+ \mathbb{E}[\int_{0}^{T}p(t)\delta b(t,u(t))\mathbbm{1}_{E_{\theta}}(t)\mathrm{d}t] + \mathbb{E}[ \int_{0}^{T}q(t)\delta \sigma(t,u(t))\mathbbm{1}_{E_{\theta}}(t)\mathrm{d}t]\notag\\
		&+ \mathbb{E}[ \int_{0}^{T}\sum\limits_{j=1}^{D}s_j(t)\delta \gamma^{j}(t,u(t))\zeta_{ij}(t)\mathbbm{1}_{E_{\theta}}(t)\mathrm{d}t],
	\end{align*}

and
	\begin{align*}
		\mathbb{E}[p(T)X_{2}(T)] =& \mathbb{E}[ \int_{0}^{T}-X_{2}(t)( f_{x}(t) + \mathbb{E}[f_{y}(t)]\varphi_{x}(X(t)) )\mathrm{d}t]+ \mathbb{E}[\int_{0}^{T} p(t)( \frac{1}{2}b_{xx}(t)\\
		&\times X_{1}^{2}(t)+ \delta b_{x}(t,u(t))X_{1}(t)\mathbbm{1}_{E_{\theta}}(t))\mathrm{d}t]+ \mathbb{E}[\int_{0}^{T} q(t) ( \frac{1}{2}\sigma_{xx}(t)X_{1}^{2}(t)\\
		& + \delta \sigma_{x}(t,u(t))X_{1}(t)\mathbbm{1}_{E_{\theta}}(t) )\mathrm{d}t]+ \mathbb{E}[\int_{0}^{T}\sum\limits_{j=1}^{D}s_j(t)\Big( \frac{1}{2}\gamma^{j}_{xx}(t)X_{1}^{2}(t) \\
		&+ \delta \gamma^{j}_{x}(t,u(t))X_{1}(t)\mathbbm{1}_{E_{\theta}}(t)\Big)\zeta_{ij}(t)\mathrm{d}t ]
	\end{align*}
\end{lemma}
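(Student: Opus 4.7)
The plan is to establish both identities by applying Itô's product formula to $p(t)X_1(t)$ and to $p(t)X_2(t)$ on $[0,T]$, then take expectations so that the true-martingale parts (the $\mathrm{d}B$ and $\mathrm{d}\tilde{\Phi}$ integrals) vanish and only the drift and quadratic-covariation contributions remain. The dynamics of $p$ are supplied by the first-order adjoint BSDE \eqref{eqn:man6adjointforxmeanfieldsingular}, and those of $X_1$, $X_2$ come from the variational equations \eqref{eqn:man6firstodervariationalequation} and \eqref{eqn:man6secondordervariationalequation}. The terminal value $p(T)=h_x(T)+\mathbb{E}[h_y(T)]\varphi_x(X(T))$ is fed into the left-hand side, but does not explicitly appear in the stated identities because the terminal relation is absorbed later in the derivation of the variational inequality, so for this lemma we only need to track the integrated drift and covariation terms.

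First, I would compute the quadratic covariations: $\mathrm{d}\langle p, X_1\rangle_t$ picks up a term $\sigma_x(t)X_1(t)q(t)+\sigma_y(t)\mathbb{E}[\varphi_x(X(t))X_1(t)]q(t)$ from the Brownian parts, plus $\delta\sigma(t,u(t))q(t)\mathbbm{1}_{E_\theta}(t)$ from the spike term, and an analogous contribution $\sum_j \gamma_x^j(t)X_1(t)s_j(t)\zeta_{ij}(t)+\cdots$ from the Markov-chain martingales (here one uses $\mathrm{d}\langle \tilde{\Phi}_j\rangle_t = \zeta_{ij}(t)\mathrm{d}t$ when $\alpha(t-)=e_i$). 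Combining the drift of $p X_1$ with these covariations, the terms involving $b_x, \sigma_x, \gamma_x$ and their mean-field counterparts $b_y,\sigma_y,\gamma_y$ cancel against the corresponding terms coming from $-H_x$ in the $p$-dynamics, provided one uses Fubini (or the lifting to the Wasserstein space) to move the outer $\mathbb{E}$ inside the $\mathrm{d}t$ integral and reshuffle $\mathbb{E}[g(t)X_1(t)]\varphi_x(X(t))$ against $g(t)\mathbb{E}[\varphi_x(X(t))X_1(t)]$. What survives is exactly the three spike-variation integrals involving $\delta b$, $\delta\sigma$ and $\delta\gamma^j$ multiplied by $\mathbbm{1}_{E_\theta}$, together with the residual $-X_1(t)(f_x(t)+\mathbb{E}[f_y(t)]\varphi_x(X(t)))$ drift from $H$, which yields the first identity.

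For $\mathbb{E}[p(T)X_2(T)]$ the procedure is identical, but now $X_2$ is driven by second-order terms: the drift contains $b_x X_2 + b_y \mathbb{E}[\varphi_x X_2] + \tfrac{1}{2}b_{xx}X_1^2 + \delta b_x(t,u(t))X_1(t)\mathbbm{1}_{E_\theta}(t)$, and similarly for the $\mathrm{d}B$ and $\mathrm{d}\tilde\Phi$ integrands. After cancellation against $-H_x$ as before, the extra $\tfrac12 b_{xx}X_1^2$, $\tfrac12\sigma_{xx}X_1^2$, $\tfrac12\gamma_{xx}^j X_1^2$ pieces, together with the $\delta b_x X_1, \delta\sigma_x X_1, \delta\gamma_x^j X_1$ pieces from the covariations, multiply respectively by $p,q,s_j\zeta_{ij}$ and produce the stated expression. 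The main technical obstacle I anticipate is threefold: (i) justifying that each stochastic integral in the expansion is a true martingale (this uses the $L^2$ estimates on $(p,q,s)$ from the BSDE theory and on $X_1, X_2$ from Lemma \ref{lem:man6lemma2}, together with the growth assumption $(\mathcal{C}4)$); (ii) handling the mean-field couplings via a clean Fubini argument $\mathbb{E}\!\left[\mathbb{E}[g(t)]\varphi_x(X(t))X_i(t)\right]=\mathbb{E}[g(t)]\mathbb{E}[\varphi_x(X(t))X_i(t)]$; and (iii) correctly computing $\mathrm{d}\langle \tilde{\Phi}_j,\tilde{\Phi}_k\rangle_t$ so that only diagonal terms $\zeta_{ij}(t)\mathrm{d}t$ contribute. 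Once these are in place, grouping the terms yields both identities.
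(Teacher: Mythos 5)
Your proposal is correct and follows essentially the same route as the paper, which likewise obtains both identities by applying It\^o's product formula to $p(t)X_{1}(t)$ and $p(t)X_{2}(t)$, taking expectations so the martingale parts vanish, and cancelling the $b_x,\sigma_x,\gamma_x$ (and mean-field) contributions against $-H_x$. As a minor remark, your computation produces the drift term $-X_{1}(t)\left(f_{x}(t)+\mathbb{E}[f_{y}(t)]\varphi_{x}(X(t))\right)$ \emph{without} the extra factor $p(t)$ that appears in the printed statement of the first identity; this is consistent with how the lemma is actually used later in the proof of Theorem \ref{thm:man6nmp}, so the printed $p(t)$ there appears to be a typographical slip rather than a defect in your argument.
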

\begin{proof}
The proof uses It\^o's formula and is similar to \cite[Theorem 4.1]{zhang2012stochastic}. 
\end{proof}
We also have the following estimates needed for the proof of the variational inequality. 
\begin{proposition}
\label{man6proposition4.6zhangsun} 
Suppose Assumption \ref{assumpt:man6conditionsforexistence}  holds. Then,
	\begin{align*}
		&\mathbb{E}[ \int_{0}^{T}\{| p(t)\delta b_{x}(t,u(t))X_{1}(t)\mathbbm{1}_{E_{\theta}}(t)| + | q(t)\delta \sigma_{x}(t,u(t))X_{1}(t)\mathbbm{1}_{E_{\theta}}(t)|\\
		&\qquad+ \sum\limits_{j=1}^{D}|s_j(t)\delta \gamma^{j}_{x}(t,u(t))X_{1}(t)\mathbbm{1}_{E_{\theta}}(t)\zeta_{ij}(t)| \}\mathrm{d}t] \leq \theta\breve{\rho}(\theta),\\
		&\mathbb{E}[\int_{0}^{T}\{|( P(t) b_{y}(t) + P(t)\sigma_{x}(t)\sigma_y(t) + Q(t)\sigma_y(t))X_{1}(t)\mathbb{E}[\varphi(X_{1}(t))]| \\
		&\qquad + \sum\limits_{j=1}^{D}|(( P(t) + S_j(t) )\gamma^{j}_{x}(t)\gamma^{j}_y(t) + S_j(t)\gamma_y^{j}(t) )X_{1}(t)\mathbb{E}[\varphi(X_{1}(t))]\zeta_{ij}(t)|\}\mathrm{d}t] \leq \theta\breve{\rho}(\theta),\\
		&\mathbb{E}[ \int_{0}^{T}\{| P(t)\sigma_y^{2}(t) (\mathbb{E}[\varphi(X_{1}(t))])^{2}|  + \sum\limits_{j=1}^{D} |(P(t) + S_j(t))\gamma^{j}_y(t)^{2} (\mathbb{E}[\varphi(X_{1}(t))])^{2}\zeta_{ij}(t)|\}\mathrm{d}t] \\
		&\leq \theta\breve{\rho}(\theta),
	\end{align*}
where $\breve{\rho}(\cdot): (0,\infty) \to (0,\infty)$ satisfies $\breve{\rho}(\theta) \to 0$ as $\theta \downarrow 0$.
\end{proposition}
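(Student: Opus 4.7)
The plan is to prove the three estimates by combining four standard ingredients: the moment bound $\mathbb{E}[\sup_{t\in[0,T]}|X_{1}(t)|^{2}]\le C\theta$ extracted from the proof of Lemma~\ref{lem:man6lemma2}; the smallness estimate $\sup_{t}|\mathbb{E}[\varphi_{x}(X(t))X_{1}(t)]|^{2}\le \theta\hat{\rho}(\theta)$ from Lemma~\ref{lem:man6lemma4}, read as the intended interpretation of the $\mathbb{E}[\varphi(X_{1}(t))]$ factors in parts (ii) and (iii) of the statement (this being the linearisation of $\mathbb{E}[\varphi(X^{\theta})]-\mathbb{E}[\varphi(X^{*})]$ that is actually produced by the variational equations); uniform boundedness of the first derivatives of $b,\sigma,\gamma$ supplied by $(\mathcal{C}4)$; and the square-integrability of both the first and second-order adjoint processes together with the absolute continuity of the Lebesgue integral over the deterministic spike set $E_{\theta}$ of measure $\theta$.

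For the first estimate, I would take the generic summand $p(t)\,\delta b_{x}(t,u(t))\,X_{1}(t)\,\mathbbm{1}_{E_{\theta}}(t)$. Boundedness of $\delta b_{x}$ together with Cauchy--Schwarz applied twice gives
\begin{equation*}
\mathbb{E}\!\left[\int_{0}^{T}|p\,\delta b_{x}\,X_{1}|\mathbbm{1}_{E_{\theta}}\dif t\right]\le C\bigl(\mathbb{E}[\sup_{t}|X_{1}(t)|^{2}]\bigr)^{1/2}\sqrt{\theta}\bigl(\mathbb{E}[\int_{E_{\theta}}|p(t)|^{2}\dif t]\bigr)^{1/2}.
\end{equation*}
The first factor is $O(\sqrt{\theta})$, the second contributes $\sqrt{\theta}$, and the third is $o(1)$ by absolute continuity of the Lebesgue integral applied to $t\mapsto \mathbb{E}|p(t)|^{2}\in L^{1}([0,T])$, so the product is $\theta\cdot o(1)$. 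The same reasoning with $q\in L^{2}_{\mathcal{F},p}$ and $s\in \mathcal{M}^{2}$ bounds the remaining summands, with the weights $\zeta_{ij}$ already absorbed into the $\mathcal{M}^{2}$-norm.

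For the second estimate, pull $\mathbb{E}[\varphi_{x}(X(t))X_{1}(t)]$ out of the time integral using Lemma~\ref{lem:man6lemma4}, which contributes the factor $\sqrt{\theta\hat{\rho}(\theta)}$; what remains is
\begin{equation*}
\mathbb{E}\!\left[\int_{0}^{T}\bigl(|Pb_{y}|+|P\sigma_{x}\sigma_{y}|+|Q\sigma_{y}|\bigr)|X_{1}|\dif t\right]\le C\bigl(\mathbb{E}[\int_{0}^{T}(P^{2}+Q^{2})\dif t]\bigr)^{1/2}\bigl(\mathbb{E}[\int_{0}^{T}X_{1}^{2}\dif t]\bigr)^{1/2},
\end{equation*}
which is $O(\sqrt{\theta})$, yielding a total bound of $\theta\sqrt{\hat{\rho}(\theta)}$. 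The jump contribution involving $S_{j}$ and $\gamma^{j}_{y}$ is treated in the same way using $S\in\mathcal{M}^{2}$. For the third estimate the deterministic factor $(\mathbb{E}[\varphi_{x}(X)X_{1}])^{2}\le \theta\hat{\rho}(\theta)$ comes straight out of the time integral, and boundedness of $\sigma_{y}^{2}$ and $(\gamma_{y}^{j})^{2}$ together with the $L^{1}$-integrability of $P$ and $(P+S_{j})\zeta_{ij}$ yield the bound $C\theta\hat{\rho}(\theta)$. Setting $\breve{\rho}(\theta):=C\sqrt{\hat{\rho}(\theta)}+o(1)$ collects the three parts.

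The point requiring genuine care is establishing the basic moment inequality $\mathbb{E}[\sup_{t}|X_{1}(t)|^{2}]\le C\theta$ in the present regime-switching setting, since the martingale integral against the jump martingale $\tilde{\Phi}$ must be handled via the Burkholder--Davis--Gundy inequality in the $\mathcal{M}^{2}$-norm before Gronwall can be closed jointly with the Brownian contribution. This is standard but is the only step where the interaction between the mean-field term, the jump martingale and the spike set $E_{\theta}$ must be monitored simultaneously; all subsequent inequalities reduce to Cauchy--Schwarz applied to the stated integrability of the adjoint processes and the bounds supplied by $(\mathcal{C}4)$.
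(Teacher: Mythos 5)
Your proof is correct and follows the same standard route that the paper itself defers to (the paper only cites Zhang (2018), Proposition 4.6, and omits the argument): the moment bound $\mathbb{E}[\sup_{t}|X_{1}(t)|^{2}]\le C\theta$, the smallness estimate of Lemma \ref{lem:man6lemma4}, boundedness of the first derivatives from $(\mathcal{C}4)$, Cauchy--Schwarz, and absolute continuity of the Lebesgue integral over the spike set $E_{\theta}$. Your reading of the factor $\mathbb{E}[\varphi(X_{1}(t))]$ as $\mathbb{E}[\varphi_{x}(X(t))X_{1}(t)]$ is the right one --- the literal expression need not vanish as $\theta\downarrow 0$ since $\varphi(0)$ may be nonzero, whereas the corrected expression is exactly the quantity produced in the expansion of $\mathbb{E}[h_{xx}(T)X_{1}^{2}(T)]$ where the proposition is invoked, and it is the one controlled by Lemma \ref{lem:man6lemma4}.
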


\begin{proof}
The proof is similar to \cite[Proposition 4.6]{zhang2018general}. It is therefore omitted here.
\end{proof}


\subsection{Proofs of the main results}
Here, we provide the proofs for the main results. 

\begin{proof}(Proof of Theorem \ref{thm:man6nmp})
 Assume $(X^{*}(\cdot), u^{*}(\cdot), \xi^{*}(\cdot))$ is an optimal solution to   \eqref{eqn:man6meanfieldcontrolledstate}--\eqref{eqn:man6optimalcontroproblem}. We prove the theorem using \eqref{eqn:man6inequalityoflimits}.
	From the definition of the performance criterion \eqref{eqn:man6performancecriterion}, we have

\begin{align*}
	J_{1} =& J(u^{\theta}, \xi^{\theta}) - J(u^{\theta},\xi^{*}) = \mathbb{E}[ \int_{0}^{T}\{ \int_{0}^{1}f_{x}( t, X^{\theta,\xi^{*}}(t) + \vartheta (X^{\theta,\xi^{\theta}}(t) - X^{\theta,\xi^{*}}(t)) \\
	&,\mathbb{E}[\varphi( X^{u^{\theta},\xi^{*}}(t) + \vartheta(X^{\theta,\xi^{\theta}}(t) - X^{\theta,\xi^{*}}(t)))], u^{\theta}(t),\alpha(t)) \\
	&\times (X^{\theta, \xi^{\theta}}(t) - X^{\theta,\xi^{*}}(t))\mathrm{d}\vartheta +\int_{0}^{1}f_{y}( t, X^{\theta,\xi^{*}}(t) + \vartheta(X^{\theta,\xi^{\theta}}(t) - X^{\theta,\xi^{*}}(t)),\\
	&,\mathbb{E}[\varphi( X^{\theta,\xi^{*}}(t) + \vartheta(X^{\theta,\xi^{\theta}}(t) - X^{\theta,\xi^{*}}(t)))], u^{\theta}(t), \alpha(t)) \\
	&\times\mathbb{E}[\varphi_{x}( X^{\theta,\xi^{*}}(t) + \vartheta (X^{\theta,\xi^{\theta}}(t) - X^{\theta,\xi^{*}}(t)) \cdot(X^{\theta, \xi^{\theta}}(t) - X^{\theta,\xi^{*}}(t))]\mathrm{d}\vartheta\}\mathrm{d}t\\
	&+ \int_{0}^{1}h_{x}^{\theta}(T)(X^{\theta, \xi^{\theta}}(T) - X^{\theta,\xi^{*}}(T))\mathrm{d}\vartheta + \int_{0}^{T}\kappa(t)\mathrm{d}(\xi^{\theta}(t) - \xi^{*}(t)) \\ 
	&+ \int_{0}^{1}h_{y}^{\theta}(T)\mathbb{E}[\varphi_{x}( X^{\theta,\xi^{*}}(t) +  \vartheta(X^{\theta,\xi^{\theta}}(t) - X^{\theta,\xi^{*}}(t))) (X^{\theta, \xi^{\theta}}(t) - X^{\theta,\xi^{*}}(t)) ]\mathrm{d}\vartheta].
\end{align*}
Dividing by $\theta$ and using Lemma \ref{lem:man6lemma1} and  \eqref{eqn:man64.1} together with the fact that derivatives of $f$ are bounded, and that of $h$ are bounded by $C_{1}\left(1 + |x|^{p} + |y|^{p} + |u|^{p}\right)$, we have

\begin{align*}
	\lim\limits_{\theta\to 0}\frac{J_{1}}{\theta} 
	=& \mathbb{E}[\int_{0}^{T}f_{x}(t,X^{*}(t),\mathbb{E}[\varphi(X^{*}(t))],u^{*}(t),\alpha(t))Z(t) \mathrm{d}t] \\
	&+ \mathbb{E}[\int_{0}^{T}f_{y}(t, X^{*}(t), \mathbb{E}[\varphi(X^{*}(t))],u^{*}(t),\alpha(t))\\
	&\times \mathbb{E}[\varphi_{x}(X^{*}(t)Z(t))]\mathrm{d}t] + \mathbb{E}[p(T)Z(T)] + \mathbb{E}[\int_{0}^{T}\kappa(t)\mathrm{d}(\iota(t) - \xi^{*}(t))].
\end{align*}
Using \eqref{pror1}, we have
\begin{align}
	\label{eqn:man6page211}
	\lim\limits_{\theta \to 0}\frac{J_{1}(\theta)}{\theta} =& \mathbb{E}[ \int_{0}^{T}f_{x}(t,X^{*}(t),\mathbb{E}[\varphi(X^{*}(t))],u^{*}(t),\alpha(t))Z(t) \mathrm{d}t]\notag \\
	&+ \mathbb{E}[ \int_{0}^{T}f_{y}( t, X^{*}(t), \mathbb{E}[\varphi(X^{*}(t))],u^{*}(t),\alpha(t))\mathbb{E}[\varphi_{x}(X^{*}(t)Z(t))]\mathrm{d}t]\notag \\
	&- \mathbb{E}[ \int_{0}^{T}Z(t)\{ f_{x}(t) + \mathbb{E}[f_{y}(t)]\varphi_{x}(X(t))\}\mathrm{d}t] \\ 
	& + \mathbb{E}[ \int_{0}^{T}(\kappa(t) + p(t)G(t,\alpha(t-))\mathrm{d}(\iota(t)- \xi^{*}(t))] \notag \\
	=& \mathbb{E}[\int_{0}^{T} (\kappa(t) + p(t)G(t, \alpha(t-))\mathrm{d}(\iota(t)- \xi^{*}(t))].
\end{align}
We establish the second part following the approach in \cite{zhang2018general}. The main difference is that the coefficients in our setting depend on 
$\mathbb{E}[\varphi(X(t))]$ rather than on $\mathbb{E}[X(t)]$.
	\begin{align*}
		0 \geq&
		J_2= J(x_{0},e_{i}; u^{\theta}, \xi^{*}) - J(x_{0}, e_{i}; u^{*}, \xi^{*})\\
		=& \mathbb{E}[ p(T)(X_{1}(T) + X_{2}(T))]+ \mathbb{E}[ \int_{0}^{T}\{ \delta f(t,u(t)) + \frac{1}{2} f_{xx}(t)X_{1}^{2}(t)\}\mathrm{d}t] + o(\theta)  \\
		&+ \frac{1}{2}\mathbb{E}[ h_{xx}(T)X_{1}^{2}(T)] + \int_{0}^{T}\mathbb{E}[( f_{x}(t) + \mathbb{E}[ f_{y}(t) ]\varphi_{x}(X^{*}(t)) )(X_{1}(t) + X_{2}(t))]\mathrm{d}t.
	\end{align*}
	Using Lemma \ref{man6lemma**}, we have
	\begin{align*}
			&J(x_{0}, e_{i}; u^{\theta}, \xi^{*}) - J(x_{0}, e_{i}; u^{*}, \xi^{*})\\
			=& \mathbb{E}[ -\int_{0}^{T}(X_{1}(t) + X_{2}(t))( f_{x}(t) + \mathbb{E}[f_{y}(t)]\varphi_{x}(X(t)))\mathrm{d}t ]+ \mathbb{E}[ \int_{0}^{T}p(t)\{ ( \delta b(t,u(t))\\
			&+ \delta b_{x}(t,u(t))X_{1}(t)  )\mathbbm{1}_{E_{\theta}}(t) + \frac{1}{2}b_{xx}(t)X_{1}^{2}(t) \}\mathrm{d}t ]+ \mathbb{E}[ \int_{0}^{T}q(t)\{( \delta \sigma(t,u(t)) + \delta \sigma_{x}(t) \\
			&\times X_{1}(t) )\mathbbm{1}_{E_{\theta}}(t) + \frac{1}{2}\sigma_{xx}(t)X_{1}^{2}(t)\}\mathrm{d}t ]+ \mathbb{E}[ \int_{0}^{T}\sum\limits_{j=1}^{D}s_j(t)\Big\{( \delta \gamma^{j}(t,u(t)) + \delta \gamma^{j}_{x}(t)X_{1}(t) )\mathbbm{1}_{E_{\theta}}(t)\\
			& + \frac{1}{2}\gamma^{j}_{xx}(t)X_{1}^{2}(t)\Big\}\zeta_{ij}(t)\mathrm{d}t ]+ \frac{1}{2}\mathbb{E}[ h_{xx}(T)X_{1}^{2}(T)]\\
			& + \mathbb{E}[ \int_{0}^{T}( f_{x}(t) + \mathbb{E}[f_{y}(t)]\varphi_{x}(X^{*}(t)))(X_{1}(t) + X_{2}(t))\mathrm{d}t] + \mathbb{E}[\int_{0}^{T}\{ \delta f(t,u(t))\mathbbm{1}_{E_{\theta}}(t)\\
			&+ \frac{1}{2}f_{xx}(t)X_{1}^{2}(t) \}\mathrm{d}t] + o(\theta)\\
			=& \mathbb{E}[ \int_{0}^{T} \Big(\delta H(t,u(t))\mathbbm{1}_{E_{\theta}}(t) + \frac{1}{2}H_{xx}(t)X_{1}^{2}(t)\Big)\mathrm{d}t + \frac{1}{2}\mathbb{E}[h_{xx}(T)X_{1}^{2}(T)] + o(\theta).
		\end{align*}
	It\^o's formula for semimartingales (see, for example,  \cite{protter2005stochastic}) gives
		\begin{align*}
			\mathrm{d}X_{1}^{2}(t) 
			&= 2X_{1}(t)\Big\{\Big( b_{x}(t)X_{1}(t) 
			+ b_{y}(t)\mathbb{E}[\varphi_{x}(X(t))X_{1}(t)] + \delta b(t,u(t))\mathbbm{1}_{E_{\theta}}(t)\Big)\mathrm{d}t\\
			&+ \Big( \sigma_{x}(t)X_{1}(t) + \sigma_y(t)\mathbb{E}[\varphi_{x}(X(t))X_{1}(t)] + \delta\sigma(t,u(t))\mathbbm{1}_{E_{\theta}}(t)\Big)\mathrm{d}B(t)\\
			&+\Big(\gamma_x(t)X_{1}(t) + \gamma_y(t)\mathbb{E}[\varphi_{x}(X(t))X_{1}(t)] + \delta\gamma(t,u(t))\mathbbm{1}_{E_{\theta}}(t) \Big)\mathrm{d}\tilde{\Phi}(t)\Big\}\\
			&+ \Big( \sigma_{x}(t)X_{1}(t) + \sigma_y(t)\mathbb{E}[\varphi_{x}(X(t))X_{1}(t)] + \delta\sigma(t,u(t))\mathbbm{1}_{E_{\theta}}(t)\Big)^{2}\mathrm{d}t \\
			&+\sum\limits_{j=1}^{D} \Big(\gamma^{j}_{x}(t)X_{1}(t) + \gamma^{j}_y(t)\mathbb{E}[\varphi_{x}(X(t))X_{1}(t)] + \delta\gamma^{j}(t,u(t))\mathbbm{1}_{E_{\theta}}(t) \Big)^{2}\zeta_{ij}(t)\mathrm{d}t.
		\end{align*}
	Thus, we have
		\begin{align*}
			&\mathbb{E}[ h_{xx}(T)X_{1}^{2}(T)]\\
			&= \mathbb{E}[ P(0)X_{1}^{2}(0) + \int_{0}^{T}P(t)\mathrm{d}X_{1}^{2}(t) + \int_{0}^{T}X_{1}^{2}(t)\mathrm{d}P(t) + \int_{0}^{T}\mathrm{d}[ P, X_{1}^{2}](t)]\\
			&= \mathbb{E}[ \int_{0}^{T}2X_{1}(t)\{ P(t)b_{y}(t) + \sigma_{x}(t)\sigma_{y}(t) + \sigma_{y}(t)Q(t) \}\mathbb{E}[\varphi_{x}(X(t))X_{1}(t)]\mathrm{d}t] \\
			& + \mathbb{E}[\int_{0}^{T}2X_{1}(t)\sum\limits_{j=1}^{D}\Big\{P(t)\gamma^{j}_{x}(t)\gamma^{j}_{y}(t) + S_j(t)\gamma^{j}_{y}(t) \Big\}\zeta_{ij}(t)\mathbb{E}[\varphi_{x}(X(t))X_{1}(t)]\mathrm{d}t]\\
			&+ \mathbb{E}[\int_{0}^{T}\sum\limits_{j=1}^{D}P(t)\{ \sigma_{y}^{2}(t) + \gamma^{j}_{y}(t)^{2}\zeta_{ij}(t)\}( \mathbb{E}[\varphi_{x}(X(t))X_{1}(t)])^{2}\mathrm{d}t] + \mathbb{E}[ \int_{0}^{T}\sum\limits_{j=1}^{D}P(t) \\
			&\times\{ (\delta\sigma(t,u(t)))^{2} + (\delta\gamma^{j}(t,u(t)))^{2}\zeta_{ij}(t) \}(\mathbbm{1}_{E_{\theta}}(t))^{2}\mathrm{d}t ]- \mathbb{E}[\int_{0}^{T}X_{1}^{2}(t)H_{xx}(t)\mathrm{d}t]\\
			&+ \mathbb{E}[2\int_{0}^{T}\sum\limits_{j=1}^{D}P(t)\Big\{ X_{1}(t) \delta b(t,u(t)) + X_{1}(t) \delta\sigma(t,u(t)) + X_{1}(t)(\gamma^{j}_{x}(t)\delta\sigma(t,u(t)) + \gamma^{j}_{y}(t)\\
			&\times \delta\gamma^{j}(t,u(t)))\zeta_{ij}(t) + (\sigma_{y}(t) + \gamma^{j}_{y}(t)\delta\gamma^{j}(t,u(t))\zeta_{ij}(t)) \mathbb{E}[\varphi_{x}(X(t))X_{1}(t)]\Big\}\mathbbm{1}_{E_{\theta}}(t)\mathrm{d}t]\\
			&+ \mathbb{E}[2\int_{0}^{T}\sum\limits_{j=1}^{D}X_{1}(t)\Big\{ Q(t) \delta\sigma(t,u(t)) + S_j(t)\delta\gamma^{j}(t,u(t))\zeta_{ij}(t) \Big\}\mathbbm{1}_{E_{\theta}}(t)\mathrm{d}t] + \mathbb{E}[\int_{0}^{T}\sum\limits_{j=1}^{D}S_j(t)\\
			&\times\Big\{( \gamma^{j}_{x}(t)X_{1}(t) )^{2} + (\gamma^{j}_{y}(t))^{2}(\mathbb{E}[\varphi_{x}(X(t))X_{1}(t)])^{2}+ (\delta\gamma^{j}(t,u(t))\mathbbm{1}_{E_{\theta}}(t))^{2} \\
			&+ 2\gamma^{j}_{x}(t)X_{1}(t)\gamma^{j}_{y}(t)\mathbb{E}[\varphi_{x}(X(t))X_{1}(t)] + 2\gamma^{j}_{x}(t)X_{1}(t)\delta\gamma^{j}(t,u(t))\mathbbm{1}_{E_{\theta}}(t)\\
			&+ 2\gamma^{j}_{y}(t)\mathbb{E}[\varphi_{x}(X(t))X_{1}(t)]\delta\gamma^{j}(t,u(t))\mathbbm{1}_{E_{\theta}}(t)\Big\}\zeta_{ij}(t)\mathrm{d}t].
		\end{align*}
Using once more Proposition \ref{man6proposition4.6zhangsun}, the above expression is reduced to
		\begin{align*}
			\mathbb{E}[h_{xx}(T)X_{1}^{2}(T)] =& \mathbb{E}[\int_{0}^{T} P(t)(\delta\sigma(t,u(t)))^{2} \mathbbm{1}_{E_{\theta}}(t)\mathrm{d}t] - \mathbb{E}[\int_{0}^{T}X_{1}^{2}(t)H_{xx}(t)\mathrm{d}t] \\
			& + \mathbb{E}[\int_{0}^{T}\sum\limits_{j=1}^{D}\Big( P(t) + S_j(t)\Big)(\delta\gamma^{j}(t,u(t)))^{2}\zeta_{ij}(t) \mathbbm{1}_{E_{\theta}}(t)\mathrm{d}t]  + o(\theta).
		\end{align*}
Thus,
		\begin{align}
			\label{eqn:man6**}
			&0 \geq J(x_{0},e_{i}; u^{\theta}, \xi^{*}) - J(x_{0}, e_{i}; u^{*}, \xi^{*})= \mathbb{E}[\int_{0}^{T}\{ \frac{1}{2}P(t)(\delta\sigma(t,u(t)))^{2} \\\notag
			&+ \sum\limits_{j=1}^{D}(P(t) + S_j(t))(\delta\gamma^{j}(t,u(t)))^{2}\zeta_{ij}(t) + \delta H(t,u(t)) \}\mathbbm{1}_{E_{\theta}}(t)\mathrm{d}t] + o(\theta).
		\end{align}
Dividing both sides of \eqref{eqn:man6**} by $\theta$ and letting $\theta \rightarrow 0$ and combining the results with \eqref{eqn:man6page211} yields
		\begin{align*}
			0 \geq &\mathbb{E}[ H(t, X^{*}(t), \mathbb{E}[\varphi(X^{*}(t))], u(t),\alpha(t), p(t), q(t), s(t))\\
			&- H(t, X^{*}(t), \mathbb{E}[\varphi(X^{*}(t))],u^{*}(t),\alpha(t), p(t), q(t), s(t))+ \frac{1}{2}P(t)(\delta\sigma(t,u(t)))^{2}\\
			& + \frac{1}{2}\sum\limits_{j=1}^{D}\Big(P(t) + S_j(t)\Big)(\delta\gamma^{j}(t,u(t)))^{2}\zeta_{ij}(t)] \\
			&+ \mathbb{E}[ \int_{0}^{T}\{ \kappa(t) + G(t, \alpha(t-))p(t) \}\mathrm{d}(\iota(t) - \xi^{*}(t) )]. 
		\end{align*}
From the above, it hold $\mathbb{P}$-a.s that for a.e $t\in[0,T],\, \forall u\in U_{1}$
		\begin{align*}
			0\geq& H(t, X^{*}(t), \mathbb{E}[\varphi(X^{*}(t))],u(t),\alpha(t), p(t),q(t), s(t))\\
			&- H(t, X^{*}(t), \mathbb{E}[\varphi(X^{*}(t))], u^{*}(t),\alpha(t), p(t), q(t),s(t))+\frac{1}{2}P(t)(\delta\sigma(t,u(t)))^{2}\\
			&+   \frac{1}{2}\sum\limits_{j=1}^{D}\Big(P(t) + S_j(t)\Big)(\delta\gamma^{j}(t,u(t)))^{2}\zeta_{ij}(t)\\
&			+ \int_{0}^{T}\{ \kappa(t) + G(t, \alpha(t-))p(t) \}\mathrm{d}(\iota(t) - \xi^{*}(t) ) 
		\end{align*}
Substituting $\iota(t) = \xi^{*}(t)$, we obtain \eqref{eqn:man6variationalinequality}. In addition, if we choose $u(t) = u^{*}(t)$ and proceed as in \cite[Theorem 4.2]{cadenillas1994stochastic}, we deduce \eqref{eqn:prob1} and \eqref{eqn:prob2}.		
\end{proof}
Now we proceed to the proof of the sufficient maximum principle.

\begin{proof}[Proof of Theorem \ref{thm:man6sufficientconditionforsingularregimeswitchingmeanfield}]
	For any $(u(\cdot), \xi(\cdot))\in \mathcal{U}$, consider the difference
	\begin{align*}
		&J(x_{0}, e_{i}, u(\cdot), \xi(\cdot)) - 	J(x_{0}, e_{i}, u^{*}(\cdot), \xi^{*}(\cdot))\\
		=& \mathbb{E}[ \int_{0}^{T}\{ f(t, e_i) - f^{*}(t,e_i)\}\mathrm{d}t]+ \mathbb{E}[\int_{0}^{T}\kappa(t)\mathrm{d}(\iota(t) - \xi^{*}(t))]\\
		&+ \mathbb{E}[ h(X(T), \mathbb{E}[\varphi(X(T))], \alpha(T)) - h(X^{*}(T), \mathbb{E}[\varphi(X^{*}(T))], \alpha(T))] \\
		=&  I_{1} + I_{2} + I_{3}.
	\end{align*}
where $f^{*}(t,e_i)=f(t,X^{*}(t),\mathbb{E}[\varphi(X^{*}(t))],u^*(t), e_i)$ and $f(t,e_i)$ is define similarly with $X$ in lieu of $X^*$. 	Using the concavity of $h$, we have
	
	\begin{align*}
		I_{2} \leq& \mathbb{E}[ h_{x}(T)(X(T) - X^{*}(T)) + h_{y}(T)(\mathbb{E}[\varphi_{x}(X(T))(X(T) - X^{*}(T))])] \\
		=& \mathbb{E}[p(T)(X(T) - X^{*}(T))].
	\end{align*}
	Now, applying It\^{o}'s product rule to $p(T)(X(t) - X^{*}(T))$, we get
	\begin{align*}
		&\mathbb{E}[p(T)(X(T) - X^{*}(T))] \\
		=& \mathbb{E}\big[-\int_{0}^{T}(X(t) - X^{*}(t))\big\{ f_{x}^*(t,e_{i}) + b_{x}^*(t,e_i)p(t)+ \sigma_{x}^*(t,e_{i})q(t)  \\ 
		&+ \gamma_{x}^*(t,e_i)s(t)\zeta(t) +\big( \mathbb{E}[f^*_{y}(t,e_{i})] + \mathbb{E}[b^*_{y}(t,e_i)p(t)]+ \mathbb{E}[\gamma_{y}^*(t,e_i)s(t)\zeta(t)]\big)\varphi_{x}(X(t))\big\}\mathrm{d}t \big] \\
		&+ \mathbb{E}\big[\int_{0}^{T} p(t)\{(b(t,e_i) - b^*(t,e_i))\mathrm{d}t + G(t, e_i)\mathrm{d}\xi(t) - G(t,e_i)\diffns \xi^{*}(t)\}\big]\\
		&+ \mathbb{E}\big[ \int_{0}^{T}\big(\sigma(t,e_i)  - \sigma^*(t,e^i)\big)q(t)\mathrm{d}t\big] + \mathbb{E}[\int_{0}^{T}\sum\limits_{j=1}^{D}\big\{(\gamma^{j}(t,e_i)
		- \gamma^{j,*}(t,e_i))s_j(t)\zeta_{ij}(t)\big\}\mathrm{d}t].
	\end{align*}
Using the definition of the Hamiltonian, we have
	\begin{align*}
		I_{1} =& \mathbb{E}[ \int_{0}^{T}\{ f(t, X(t), \mathbb{E}[\varphi(X(t))], u(t),\alpha(t)) - f(t, X^{*}(t), \mathbb{E}[\varphi(X^{*}(t))], u^{*}(t),\alpha(t))\}\mathrm{d}t]\\
		=& \mathbb{E}[\int_{0}^{T}\sum\limits_{j=1}^{D}\{ H(t,\alpha(t)) - H^{*}(t,\alpha(t)) + (b^{*}(t,\alpha(t)) - b(t,\alpha(t)))p(t) \\ 
		&+ (\sigma^{*}(t,\alpha(t)) - \sigma(t,\alpha(t)))q(t) + (\gamma^{j*}(t,\alpha(t)) - \gamma^{j}(t,\alpha(t)))s_j(t)\zeta_{ij}(t)\}\mathrm{d}t ].
	\end{align*}
In $I_{1}$ above, we have used the following shorthand notations:

\begin{align*}
	H(t,\alpha(t)) &= H(t, X(t), \mathbb{E}[\varphi(X(t))], u(t), \alpha(t), p(t), q(t), s),\\
	H^{*}(t,\alpha(t)) &= H(t, X^{*}(t), \mathbb{E}[\varphi(X^{*}(t))], u^{*}(t), \alpha(t), p(t), q(t), s),\\
\end{align*}
and similarly for $b,b^*,\sigma, \sigma^{*}, \gamma$ and $\gamma^{*}$. Putting everything together, we get

	\begin{align*}
		&I_{1} + I_{2} + I_{3}\\
		\leq& \mathbb{E}[ \int_{0}^{T}\{ H(t,e_i) - H^{*}(t,e_i) \}\mathrm{d}t] \\
		& - \mathbb{E}[\int_{0}^{T}(X(t) - X^{*}(t))\big\{f_{x}^{*}(t ,e_i) + b_{x}^{*}(t,e_i)p(t)+ \sigma_{x}^{*}(t,e_i)q(t)\\
		&
		+\sum\limits_{j=1}^{D}\gamma^{j,*}_{x}(t,e_i)s_j(t)\zeta_{ij}(t) +\big( \mathbb{E}[f_{y}^{*}(t,e_{i})] + \mathbb{E}[b_{y}^{*}(t,e_i)p(t)]\\
		&+\mathbb{E}[\sigma_{y}^{*}(t,e_i)q(t)] + \sum\limits_{j=1}^{D}\mathbb{E}[\gamma^{j,*}_{y}(t,e_i)s_j(t)\zeta_{ij}(t)]\big)\varphi_{x}(X(t))\big\}\mathrm{d}t]\\
		&+ \mathbb{E}[\int_{0}^{T} p(t)^{\top}\{ G(t, e_i)\mathrm{d}(\xi(t) - \xi^{*}(t))\}\mathrm{d}t] + \mathbb{E}[\int_{0}^{T}\kappa(t)\mathrm{d}(\xi(t) - \xi^{*}(t))]\\
		=& \mathbb{E}[\int_{0}^{T}\{ H(t,e_i) - H^{*}(t,e_i)\}\mathrm{d}t] - \mathbb{E}[\int_{0}^{T} (X(t) - X^{*}(t))H_{x}(t,e_i) \mathrm{d}t]\\
		&- \mathbb{E}[ \int_{0}^{T}\mathbb{E}[H_{y}(t,e_i)]\varphi_{x}(X(t))
		(X(t) - X^{*}(t))\mathrm{d}t] \\
		&+ \mathbb{E}[ \int_{0}^{T}\{ \kappa(t) + G(t, e_i)p(t)\} \mathrm{d}(\xi(t) - \xi^{*}(t))].
	\end{align*}
By the concavity of the Hamiltonian in $u$, we have

\begin{align*}
	I_{1} + I_{2} + I_{3} \leq& \mathbb{E}[ \int_{0}^{T}\langle u(t) - u^{*}(t), \frac{\partial H^{*}(t,\alpha(t))}{\partial u} \rangle\mathrm{d}t] \\
	&+\mathbb{E}[ \int_{0}^{T}\{ \kappa(t) + G(t, e_i)p(t)\} \mathrm{d}( \xi(t) - \xi^{*}(t))].
\end{align*}
Since $u$ satisfies \eqref{eqn:man6maximumcondition}, we have

\begin{equation*}
	\mathbb{E}[\int_{0}^{T}\langle u(t) - u^{*}(t), \frac{\partial H^{*}(t,e_i)}{\partial u} \rangle\mathrm{d}t] \leq 0.
\end{equation*}
On the other hand, since $\xi^{*}$ satisfies \eqref{probcond}, let $\xi$ be an $\{\mathcal{F}_{t}\}_{t\in[0,T]}$-adapted c\`agl\`ad non-decreasing process with $\xi_{0} = 0$ and such that $\mathbb{P}( \int_{0}^{T} | G(t, e_i) |\mathrm{d}\xi(t) < \infty ) = 1$. 
Then, 
	\begin{align*}
		&\mathbb{E}[\int_{0}^{T}\{ \kappa(t) + G(t, e_i)p(t)\} \mathrm{d}(\xi(t) - \xi^{*}(t))] \\
		=& \mathbb{E}[ \int_{0}^{T}\big\{ \kappa(t) + G(t, e_i)\big\}p(t) )\mathrm{d}\xi(t)] \\
		&+ \mathbb{E}[ \int_{0}^{T}\mathbbm{1}_{\{\kappa(t) + G(t, \alpha(t-))p(t) < 0\}}\big\{ \kappa(t) - G(t, e_i)p(t) \big\}\mathrm{d}(-\xi^{*}(t))]\\
		&+ \mathbb{E}[ \int_{0}^{T}\mathbbm{1}_{\{\kappa(t) + G(t, e_i)p(t) = 0\}}\big\{ \kappa(t) - G(t, e_i)p(t) \big\}\mathrm{d}(-\xi^{*}(t))]\\
		=& \mathbb{E}[ \int_{0}^{T}\big(\kappa(t) - G(t,e_i)p(t)\big)\mathrm{d}\xi^{*}(t)] \leq 0.
	\end{align*}
\end{proof}


\section{Application: Inter-bank borrowing and lending with transaction cost}\label{sec:application}
We consider a modification of the model of inter-bank borrowing and lending proposed in \cite{carmona2013mean}, where a number of banks borrow from and lend to each other based on the level of their reserves compared to a threshold set by a central bank. We assume that borrowing or lending comes with an additional cost, which we call the transaction cost. We denote the reserve of a representative bank at any given time $t$ by $X(t)$, and the threshold by $\mathbb{E}[X(t)]$. The representative bank's reserve is given by the following regime-switching mean-field dynamics:

\begin{align}
	\label{eqn:wealthapplication}
	\mathrm{d}X(t) =& \Big[ a(t,\alpha(t))\big\{\mathbb{E}[X(t)] - X(t)\big\} + b(t,\alpha(t))u(t) \Big]\mathrm{d}t + \sigma\mathrm{d}B(t)
	- c(t, \alpha(t))\mathrm{d}\xi(t),\notag\\
	X(0) =& x_0 > 0,
\end{align}
where $a$ is the rate of mean-reversion to the threshold, $b$ is an additional coefficient showing the dependence on the control on the state of the economy, $u(t)$ is the regular control representing the lending or borrowing rate (the amount to borrow or lend depending on whether the reserve $X(t)$ is below or above the threshold $\mathbb{E}[X(t)]$). Here, we assume that there are two regimes, the borrowing and lending regimes, which is modeled via a continuous time Markov chain $\alpha(t)$ defined on a finite state space $\mathcal{S}= \{e_1, e_2\}$.
The transaction cost $\xi(t)$ enters the model as an additional term with coefficient given as a function $c$ of time and the regime $\alpha(t)$. The transaction cost $\xi$ is a right continuous with left limits non-decreasing process of finite variation representing the singular control.

The representative bank's objective is to control its borrowing and lending rate at time $t$ and regime $\alpha(t)$ by selecting the rate $u(t)$ and the transaction cost $\xi(t)$ that minimises the cost functional

\begin{align}\label{valfuncapp0}
	J(u,\xi) =& \mathbb{E}\Big[ \int_{0}^{T}\big\{ \frac{1}{2}u^{2}(t) - \varrho u(t)\big(\mathbb{E}[X(t)] - X(t)\big) + \frac{\varepsilon}{2}(\mathbb{E}[X(t)] - X(t))^{2} \big\}\mathrm{d}t \notag\\
	&+\frac{\beta}{2}(\mathbb{E}[X(T)] - X(T))^{2} + \int_0^{T}\kappa(t)\mathrm{d}\xi(t) \Big],
\end{align} 
where $\varrho > 0$ controls the incentive to lend or borrow owing to the fact that a bank would like to borrow, i.e., $u(t)>0$ if its reserve $X(t)$ is smaller than the threshold $\mathbb{E}[X(t)]$ and vice versa. The quadratic terms in the running ($\varepsilon > 0$) and terminal ($\beta > 0$) costs penalize departure from the average. In order to ensure that running cost is convex in $(x, u)$, we assume (see, for example, \cite{carmona2013mean}) that $$\varrho^{2}\le \varepsilon.$$ 

In the sequel, we change the above minimisation problem to a maximisation to follow our earlier framework. In this case we state the optimal control problem as: Find the control $(u,\xi)\in\mathcal{U}$ such that 

\begin{align}
	\label{eqn:applicationobjective}
	J(u^{*},\xi^{*}) =& \max\limits_{(u,\xi)\in\mathcal{U}}\mathbb{E}\Big[ \int_{0}^{T}\big\{ -\frac{1}{2}u^{2}(t) + \varrho u(t)\big(\mathbb{E}[X(t)] - X(t)\big) - \frac{\varepsilon}{2}(\mathbb{E}[X(t)] - X(t))^{2} \big\}\mathrm{d}t \notag\\
	&- \frac{\beta}{2}(\mathbb{E}[X(T)] - X(T))^{2} - \int_0^{T}\kappa(t)\mathrm{d}\xi(t) \Big].
\end{align}
We use the sufficient maximum principle to solve the above problem. The Hamiltonian \eqref{eqn:man6hamiltonianmeanfieldsingular} becomes

	\begin{align}
		\label{eqn:applicationHamiltonian}
		H(t,x,y,u,\alpha, p)= -\frac{1}{2}u^{2} + \varrho u\big(y - x\big) - \frac{\varepsilon}{2}(y - x)^{2} + \big(a(t,\alpha)(y - x) + b(t,\alpha)u\big)p ,
	\end{align}
 and the associated first order adjoint equation \eqref{eqn:man6adjointforxmeanfieldsingular} becomes:

	\begin{align}
		\label{eqn:adjointapplication}
		\mathrm{d}p(t) =& -\big\{-\varrho u(t) + \varepsilon (\mathbb{E}[X(t)] - X(t)) - a(t,\alpha(t)) p(t) +\mathbb{E}\big[\varrho u(t)\big] \\
		&+ \mathbb{E}\big[a(t,\alpha(t))p(t)\big]\big\}\mathrm{d}t 
		+ q(t)\mathrm{d}B(t)+\sum\limits_{j=1}^{D}s_j(t)\mathrm{d}\tilde{\Phi}_j(t)
		,\notag\\
		p(T) =&\ \beta(\mathbb{E}[X(T)] - X(T)). 
	\end{align} 

The above linear mean-field BSDE can be rewritten as
\begin{align}\label{eq:BSDE-p}
p(t) =& -\beta\big(X(T)-\mathbb{E}[X(T)]\big)-\int_t^T
	\Big(
	\varrho\big(u(r)-\mathbb{E}[u(r)]\big)
\notag\\ 
&+ \varepsilon\big(X(r)-\mathbb{E}[X(r)]\big)	+ a(r,\alpha(r))\,p(r)-\mathbb{E}\big[a(r,\alpha(r))p(r)\big]
\Big)\,\mathrm{d}r\notag\\
&	+ \int_t^Tq(r)\,\mathrm{d}B(t)+\sum\limits_{j=1}^{D}\int_t^Ts_j(r)\mathrm{d}\tilde{\Phi}_j(r)
\end{align}
Taking the expectation on both sides of  \eqref{eq:BSDE-p} gives

\begin{align}\label{eq:BSDE-p1}
\mathbb{E}[	p(t)] =& -\beta\big(\mathbb{E}[X(T)]-\mathbb{E}[X(T)]\big)-\int_t^T
	\Big(
	\varrho\big(\mathbb{E}[u(s)]-\mathbb{E}[u(s)]\big)
	\notag\\ 
	&+ \varepsilon\big(\mathbb{E}[X(s)]-\mathbb{E}[X(s)]\big)	+ \mathbb{E}[a(s,\alpha(s))\,p(s)]-\mathbb{E}\big[a(t,\alpha(t))p(t)\big]
	\Big)\,\mathrm{d}s	\\
	&+ \mathbb{E}[\int_t^Tq(s)\,\mathrm{d}B(t)]+\sum\limits_{j=1}^{D}\mathbb{E}[\int_t^Ts_j(r)\mathrm{d}\tilde{\Phi}_j(r)]=0.
\end{align}

Therefore,
\begin{align}\label{zeroexpp}
\mathbb{E}[p(t)] = 0 \quad\text{for all } t\in[0,T].
\end{align}

To solve \eqref{eqn:adjointapplication}, we first consider the following linear SDE
	\begin{align}
		\label{eqn:adjointoflbsde}
		\mathrm{d}G_t^r =& G_t^{r} \big(-a(r,\alpha(r))\mathrm{d}r\big)
		,\ G_t^{t} = 1,\ t \le r \le T.
	\end{align}

Using It\^o's product rule, we have

	\begin{align*}
		\mathrm{d}(p(r)G_t^{r})
		=&\ p(r)G_t^{r} \big(-a(r,\alpha(r))\mathrm{d}r \big)\\
		&+ G_t^{r}\big[-\big\{-\varrho u(r) + \varepsilon (\mathbb{E}[X(r)] - X(r)) - a(r,\alpha(r))p(r) \notag\\
		&
		+\mathbb{E}\big[\varrho u(r)\big] + \mathbb{E}\big[a(r,\alpha(r))p(r)\big]\big\}\mathrm{d}r  + q(r)\mathrm{d}B(r)\big] \\
		=&\ \big\{ G_t^r \big(\varrho(u(r) - \mathbb{E}[u(r)])-\varepsilon(\mathbb{E}[X(r)] - X(r))- \mathbb{E}[a(r,\alpha(r))p(r)]\big)\big\}\mathrm{d}r\\
		& + G_t^{r}q(r)\mathrm{d}B(r)+\sum\limits_{j=1}^{D}G_t^{r}s_j(t)\mathrm{d}\tilde{\Phi}_j(t).
	\end{align*}
Integrating both sides from $t$ to $T$, taking the conditional expectation and using the fact that $G_t^{t} = 1$ give

\begin{align}\label{eqadjapp1}
	p(t) =& \mathbb{E}\Big[ p(T)G_t^{T} - \int_{t}^{T}\Big\{G_t^r \big(\varrho(u(r) - \mathbb{E}[u(r)])-\varepsilon(\mathbb{E}[X(r)] - X(r))\notag\\
	&- \mathbb{E}[a(r,\alpha(r))p(r)]\big)\Big\}\mathrm{d}r \mid \mathcal{F}_t  \Big]\notag\\
	=& \mathbb{E}_t\big[ p(T)G_t^{T}\big] - \int_{t}^{T}\Big\{\mathbb{E}_t\big[ G_t^r \big(\varrho(u(r) - \mathbb{E}[u(r)])-\varepsilon(\mathbb{E}[X(r)] - X(r))\big)\big]\notag\\
	&- \mathbb{E}[a(r,\alpha(r))p(r)]\mathbb{E}_t\big[ G_t^r\big]\Big\}\mathrm{d}r,
\end{align}
where $\mathbb{E}_{t}[\cdot]$ is the conditional expectation with respect to the filtration $\mathcal{F}_t$.
%
Multiplying both sides of \eqref{eqadjapp1} by $a(t,\alpha(t))$ and using the fact that $a(t,\alpha(t))$ is $\mathcal{F}_t$-measurable

\begin{align}\label{eqadjapp2}
	&	a(t,\alpha(t))p(t)\notag\\
	=& \mathbb{E}_t\big[ a(t,\alpha(t)) p(T)G_t^{T}\big] - \int_{t}^{T}\Big\{\mathbb{E}_t\big[ a(r,\alpha(r))G_t^r \big(\varrho(u(r) - \mathbb{E}[u(r)])\notag\\
	&-\varepsilon(\mathbb{E}[X(r)] - X(r))\big)\big]- \mathbb{E}[a(r,\alpha(r))p(r)]\mathbb{E}_t\big[a(r,\alpha(r)) G_t^r\big]\Big\}\mathrm{d}r.
\end{align}
Taking the expectation on both sides of \eqref{eqadjapp2} yields

\begin{align*}
	&	\mathbb{E}\big[	a(t,\alpha(t))p(t)\big] \notag\\
	=& \mathbb{E}\big[ a(t,\alpha(t)) p(T)G_t^{T}\big] - \int_{t}^{T}\Big\{\mathbb{E}\big[ a(r,\alpha(r))G_t^r \big(\varrho(u(r) - \mathbb{E}[u(r)])\notag\\
	&-\varepsilon(\mathbb{E}[X(r)] - X(r))\big)\big]- \mathbb{E}[a(r,\alpha(r))p(r)]\mathbb{E}\big[a(r,\alpha(r)) G_t^r\big]\Big\}\mathrm{d}r\notag\\
	=& \mathbb{E}\big[ a(t,\alpha(t)) \beta(\mathbb{E}[X(T)] - X(T))G_t^{T}\big]+ \int_{t}^{T}\Big\{\varrho Cov(a(r,\alpha(r)) G_t^r,u(r))\notag\\
	& -\varepsilon  Cov(a(r,\alpha(r)) G_t^r,X(r))- \mathbb{E}[a(r,\alpha(r))p(r)]\mathbb{E}\big[a(r,\alpha(r)) G_t^r\big]\Big\}\mathrm{d}r\notag\\
	=& -\beta Cov( a(t,\alpha(t))G_t^T,X(T))+ \int_{t}^{T}\Big\{\varrho Cov(a(r,\alpha(r)) G_t^r,u(r))\notag\\
	& -\varepsilon  Cov(a(r,\alpha(r)) G_t^r,X(r))- \mathbb{E}[a(r,\alpha(r))p(r)]\mathbb{E}\big[a(r,\alpha(r)) G_t^r\big]\Big\}\mathrm{d}r.
\end{align*}
The above is a Volterra type equation and substituting its solution into \eqref{eqadjapp1} gives the solution to the adjoint equation. 

Using the maximum condition on the Hamiltonian \eqref{eqn:applicationHamiltonian}, it follows that the optimal control $u^*$ satisfies the following: 

\begin{align}
	\label{eqoptcontapp1}
	u^*(t) =& b(t,\alpha(t))p(t) + \varrho(\mathbb{E}[X(t)] - X(t)). 
\end{align}
Given the above solution to the BSDE \eqref{eqn:adjointapplication}, the singular control satisfies the following:
	\begin{align}\label{eqoptconsump1}
		\mathbb{P}\{\forall t\in [0,T], (-\kappa(t) - c(t, \alpha(t))p(t)) \le 0 \} = 1,\ \text{ and} \notag\\
		\mathbb{P}( \mathbbm{1}_{\{-\kappa(t) - c(t,\alpha(t))p(t) \le 0\}}\mathrm{d}\xi^{*}(t) = 0 ) = 1\ \text{a.s}. 
	\end{align}

The above findings can be summarise in the following Corollary: 

\begin{corollary}
	Suppose the dynamic of bank's reserve is given by \eqref{eqn:wealthapplication} and the goal of the bank is to find the optimal interest rate, and  consumption so as to minimise the cost functional \eqref{valfuncapp0}. The optimal interest rate and consumption rate are respectively given by \eqref{eqoptcontapp1} and \eqref{eqoptconsump1}. The adjoint process given by \eqref{eqadjapp1}.
\end{corollary}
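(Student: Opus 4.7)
The plan is to apply the sufficient maximum principle (Theorem \ref{thm:man6sufficientconditionforsingularregimeswitchingmeanfield}) to the specific coefficients of the inter-bank model. The regular control domain $A_{1}=\mathbb{R}$ is convex and the coefficients $b,\sigma,\gamma,f$ of the application are smooth in $u$, so Assumption \ref{assumpt:man6conditionsforsufficient} is met, and the integrability hypotheses in the statement of Theorem \ref{thm:man6sufficientconditionforsingularregimeswitchingmeanfield} are guaranteed by the linear-quadratic structure together with $u^{*}\in\mathcal{U}$ and standard BSDE estimates applied to \eqref{eqn:adjointapplication}.

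The first substantive step is to verify the concavity hypothesis on $H$ and $h$. Writing $z:=y-x$, the Hamiltonian \eqref{eqn:applicationHamiltonian} reads $H=-\tfrac{1}{2}u^{2}+\varrho u z -\tfrac{\varepsilon}{2}z^{2}+\bigl(a(t,\alpha)z+b(t,\alpha)u\bigr)p$, whose Hessian with respect to $(z,u)$ is
\begin{equation*}
\begin{pmatrix}-\varepsilon & \varrho\\ \varrho & -1\end{pmatrix},
\end{equation*}
which is negative semidefinite iff $\varrho^{2}\leq\varepsilon$, precisely the standing parameter condition. A change of variables back to $(x,y,u)$ preserves concavity, so $H$ is concave in $(x,y,u)$. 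The terminal cost $h(x,y)=-\tfrac{\beta}{2}(y-x)^{2}$ is concave in $(x,y)$ since $\beta>0$. The main obstacle here was isolating the minimal assumption that makes the quadratic form in $(z,u)$ negative semidefinite; everything else is routine.

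Second, I would derive the optimal regular control $u^{*}$ from the maximum condition \eqref{eqn:man6maximumcondition}. Because $H$ is smooth and concave in $u$, the first-order condition $\partial_{u}H=0$ gives $-u^{*}(t)+\varrho\bigl(\mathbb{E}[X(t)]-X(t)\bigr)+b(t,\alpha(t))p(t)=0$, which is exactly \eqref{eqoptcontapp1}. For the singular control, I would apply \eqref{probcond} of Theorem \ref{thm:man6sufficientconditionforsingularregimeswitchingmeanfield} with $G(t,\alpha(t-))=-c(t,\alpha(t-))$ and with the cost coefficient $\kappa$ replaced by $-\kappa$ (due to the passage from the minimisation in \eqref{valfuncapp0} to the maximisation in \eqref{eqn:applicationobjective}); this yields \eqref{eqoptconsump1} directly.

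Finally, the representation \eqref{eqadjapp1} of the adjoint process is what has already been obtained earlier in the section by introducing the integrating factor $G_{t}^{r}$ defined by \eqref{eqn:adjointoflbsde}, applying It\^o's product rule to $p(r)G_{t}^{r}$, integrating from $t$ to $T$, and taking $\mathcal{F}_{t}$-conditional expectations; the mean-field term $\mathbb{E}[a(r,\alpha(r))p(r)]$ is then determined by a Volterra-type equation whose solvability follows from boundedness of $a$ and $G_{t}^{r}$. Since the corollary only assembles the previously derived formulas under the hypotheses of the sufficient maximum principle, once the concavity step is in place the remaining content of the proof is essentially bookkeeping.
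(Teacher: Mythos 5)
Your proposal is correct and follows essentially the same route as the paper: apply the sufficient maximum principle, obtain $u^{*}$ from the first-order condition on the Hamiltonian, read off the singular-control conditions from \eqref{probcond} with $G=-c$ and the sign of $\kappa$ flipped by the passage to maximisation, and represent $p$ via the integrating factor $G_{t}^{r}$ and the resulting Volterra equation. The only difference is that you make the concavity verification explicit (the Hessian in $(y-x,u)$ being negative semidefinite iff $\varrho^{2}\leq\varepsilon$), whereas the paper merely invokes this parameter condition as ensuring convexity of the running cost; this is a welcome clarification rather than a deviation.
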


\begin{remark}
	It is worth mentioning that since the coefficient $a$ in \eqref{eqn:wealthapplication} depends on the Markov chain $\alpha$, the mean-field BSDE \eqref{eqn:adjointapplication} cannot be solved explicitly as in \citep{agram2022mean}. 
		However, when $a$ is deterministic and does not depend on $\alpha$, the solution to the mean-field BSDE \eqref{eqn:adjointapplication} can be obtained explicitly. In fact, $G$ satisfies the following linear singular equation		
		\begin{align}
			\label{eqn:adjointoflbsden}
			\mathrm{d}G_t^r =& G_t^{r} \big(-a(r)\mathrm{d}r \big)
			,\ G_t^{t} = 1,\ t \le r \le T.
		\end{align}
	Using It\^o's product rule once more and \eqref{zeroexpp}, we have
	\begin{align*}
		\mathrm{d}(p(r)G_t^{r})
		=& \big\{ G_t^r \big(\varrho(u(r) - \mathbb{E}[u(r)])-\varepsilon(\mathbb{E}[X(r)] - X(r))\big)\big\}\mathrm{d}r+ G_t^{r}q(r)\mathrm{d}B(r)+\sum\limits_{j=1}^{D}s_j(t)\mathrm{d}\tilde{\Phi}_j(t).
	\end{align*}
	Integrating both sides from $t$ to $T$, taking the conditional expectation and using the fact that $G_t^{t} = 1$ give
	\begin{align}\label{eqadjapp1n}
		p(t) =& \mathbb{E}_t\big[ p(T)G_t^{T}\big] - \int_{t}^{T}\Big\{\mathbb{E}_t\big[ G_t^r \big(\varrho(u(r) - \mathbb{E}[u(r)])-\varepsilon(\mathbb{E}[X(r)] - X(r))\big)\big]\Big\}\mathrm{d}r\notag\\
		=&\beta G_t^{T}\mathbb{E}_t\big[(\mathbb{E}[X(T)] - X(T))\big]\notag\\
		&- \int_{t}^{T}G_t^r\Big\{\mathbb{E}_t\big[  \big(\varrho(u(r) - \mathbb{E}[u(r)])-\varepsilon(\mathbb{E}[X(r)] - X(r))\big)\big]\Big\}\mathrm{d}r.
	\end{align}

\end{remark}

\section{Conclusion}
\label{sec:man6conclusion}
In this work, we have established necessary and sufficient conditions of optimality via the stochastic maximum principle for a model with regime switching and mean-field components. In deriving the necessary condition of optimality, we assumed that the regular control space was not convex, and so the spike variation approach was used whereas a convex perturbation was used for the singular control. The main task was the derivation of the variational inequality which involved second order adjoint processes since the regular control appears in the diffusion coefficient. In order to derive the sufficient maximum principle, we assumed that the regular control space was convex. Using the maximum condition, and the concavity of the Hamiltonian with respect to the regular control, the proof was done. As an example of situation where the current model can be applied, we considered a modified version an inter-bank borrowing and lending problem (see for example, \cite{carmona2013mean}) with transaction cost. Applying the sufficient maximum principle, we explicitly obtained the optimal rate of borrowing or lending for a representative bank. 

\appendix
	
\section{Proof of the auxiliary results}
\label{sec:proofofauxi}
Here we prove the auxiliary results.	
\begin{proof}[Proof of Lemma \ref{lem:man6lemma1}]
	From the controlled state process dynamics \eqref{eqn:man6meanfieldcontrolledstate}, we have 
	
		\begin{align*}
			&|X^{\theta,\xi^\theta}(t)-X^{\theta,\xi^{*}}(t)|^2 \\ 
			=& |\int_0^t\{ b(s,X^{\theta,\xi^\theta}(s),\mathbb{E}[\varphi(X^{\theta,\xi^\theta}(s))], u^\theta(s),\alpha(s)) \\ 
			&- b(s,X^{\theta,\xi^{*}}(s),\mathbb{E}[\varphi(X^{\theta,\xi^{*}}(s))], u^\theta(s),\alpha(s)) \}\mathrm{d}s \\ 
			&+ \int_0^t\{ \sigma(s,X^{\theta,\xi^\theta}(s),\mathbb{E}[\varphi(X^{\theta,\xi^\theta}(s))],u^\theta(s),\alpha(s)) \\ 
			&- \sigma(s,X^{\theta,\xi^{*}}(s),\mathbb{E}[\varphi(X^{\theta,\xi^{*}}(s))],u^\theta(s),\alpha(s)) \}\mathrm{d}B(s) \\
			&+ \int_0^t\{ \gamma(s,X^{\theta,\xi^\theta}(s),\mathbb{E}[\varphi(X^{\theta,\xi^\theta}(s))],u^\theta(s),\alpha(s)) \\ 
			&- \gamma(s,X^{\theta,\xi^{*}}(s),\mathbb{E}[\varphi(X^{\theta,\xi^{*}}(s))],u^\theta(s),\alpha(s)) \}\mathrm{d}\tilde{\Phi}(s)  + \theta\int_{0}^{t} G(s, \alpha(s-))\mathrm{d}(\xi^\theta - \xi^{*})(s)|^2,
		\end{align*}
	Taking the supremum on both sides, the expectation, and using the Lipschitz continuity of the coefficients, and applying Burkh\"older-Davis-Gundy (B-D-G) inequality, we obtain
		\begin{align*}
			&	\mathbb{E}[\sup_{t\in[0,T]}|X^{\theta,\xi^\theta}(t)-X^{\theta,\xi^{*}}(t)|^2] \\
			\le& C \int_0^T \mathbb{E}[\sup_{t\in[0,s]}|X^{\theta,\xi^\theta}(t)-X^{\theta,\xi^{*}}(t)|^2]\mathrm{d}t \\
			&+ C \int_0^T\sum\limits_{j=1}^{D}\mathbb{E}[\sup_{t\in[0,s]}|X^{\theta,\xi^\theta}(t)-X^{\theta,\xi^{*}}(t)|^2]\zeta_{j}(t)\mathrm{d}t + C\theta^{2}  \|\int_0^T|\diffns \xi^*(t)|+|\diffns \iota(t)|\|_{L^{\infty}} ,
		\end{align*}

where the last term follows from the boundedness of $G$. Applying Gronwall's lemma yields
	
	\begin{equation}
		\label{eqn:man64.1}
		\mathbb{E}[\sup_{t\in[0,T]}|X^{\theta,\xi^\theta}(t)-X^{\theta,\xi^{*}}(t)|^2] \le C\theta^{2}.
	\end{equation}
	Therefore
	\begin{equation}
		\label{eqn:man64.11}
		\lim_{\theta \rightarrow 0}\mathbb{E}[\sup_{t\in[0,T]}|X^{\theta,\xi^*}(t)-X^{u^{*},\xi^{*}}(t)|^2] = 0.
	\end{equation}
	Using similar arguments as above, we have
	$\mathbb{E}[\sup_{t\in[0,T]}|Z(t)|^p] < \infty.$ Let 
	\begin{equation}
		\label{eqn:man6Z1theta}
		Z_1^\theta(t) = \frac{X^{\theta,\xi^\theta}(t)-X^{\theta,\xi^{*}}(t)}{\theta} - Z(t), \text{ that is } X^{\theta, \xi^{\theta}}(t) - X^{\theta,\xi^{*}}(t) = \theta \big(Z_{1}^{\theta} + Z(t)\big)
	\end{equation}
	and set
	
	\begin{align*}
		b^{\mu^\theta}_x(t)=&b_x( X^{\theta,\xi^{*}}(t) + \mu(X^{\theta,\xi^\theta}(t) - X^{\theta,\xi^{*}}(t),\mathbb{E}[\varphi(X^{\theta,\xi^{*}})(t)] + \mu(\mathbb{E}[\varphi(X^{\theta,\xi^\theta})(t)] \\ 
		&- \mathbb{E}[\varphi(X^{\theta,\xi^{*}})(t)]),u^\theta(t),\alpha(t)),
	\end{align*} 
	and similarly for $b^{\mu^\theta}_y(t), \sigma^{\mu^\theta}_x(t), \sigma^{\mu^\theta}_y(t),\gamma^{\mu^\theta}_x(t)$, and  $\gamma^{\mu^\theta}_y(t)$.
	Also define	
	$$	b_x^*(t):=	b_x(t,X^{*}(t),\mathbb{E}[\varphi(X^{*}(t))],u^{*}(t),\alpha(t))$$ and similarly for other coefficients.
	Then, using mean-value theorem, we have
	
		\begin{align*}
			&X^{\theta,\xi^\theta}(t)-X^{\theta,\xi^{*}}(t) \\
			&= \int_0^t\int_0^1b^{\mu^\theta}_x(t)(X^{\theta,\xi^\theta}(s)-X^{\theta,\xi^{*}}(s))\mathrm{d}\mu \mathrm{d}s + \int_0^t\int_0^1b^{\mu^\theta}_y(t)(\mathbb{E}[\varphi(X^{\theta,\xi^\theta})(s)] \\
			&-\mathbb{E}[\varphi(X^{\theta,\xi^{*}})(s)])\mathrm{d}\mu \mathrm{d}s + \int_0^t\int_0^1\sigma^{\mu^\theta}_x(s)(X^{\theta,\xi^\theta}(s)-X^{\theta,\xi^{*}}(s))\mathrm{d}\mu \mathrm{d}B(s) \\
			&+ \int_0^t\int_0^1\sigma^{\mu^\theta}_y(s)(\mathbb{E}[\varphi(X^{\theta,\xi^\theta})(s)]-\mathbb{E}[\varphi(X^{\theta,\xi^{*}})(s)])\mathrm{d}\mu \mathrm{d}B(s) \\
			&+ \int_0^t\int_0^1\gamma^{\mu^\theta}_x(s)(X^{\theta,\xi^\theta}(s)-X^{\theta,\xi^{*}}(s))\mathrm{d}\mu \mathrm{d}\tilde{\Phi}(s) \\
			&+ \int_0^t\int_0^1\gamma^{\mu^\theta}_y(s)(\mathbb{E}[\varphi(X^{\theta,\xi^\theta})(s)]-\mathbb{E}[\varphi(X^{\theta,\xi^{*}}(s))])\mathrm{d}\mu \mathrm{d}\tilde{\Phi}(s) + \theta\int_{0}^{t} G(s, \alpha(s-))\mathrm{d}(\iota - \xi^{*})(s).
		\end{align*}
	
	Substituting the above into the expression of $Z_1^\theta$ we get
	
		\begin{align*}
			&Z_1^\theta(t) \\
			=& \int_0^t\int_0^1 b^{\mu^\theta}_x(s)Z_1^\theta(s)\mathrm{d}\mu \mathrm{d}s +\int_0^t\int_0^1 \{b^{\mu^\theta}_x(s)-b^*_x(s)\}Z(s)\mathrm{d}\mu \mathrm{d}s \\
			&+ \int_0^t\int_0^1 b^{\mu^\theta}_y(s)\mathbb{E}[\int_0^1\varphi^{\vartheta^\theta}_x(s)\diffns \vartheta Z_1^\theta(s)]\mathrm{d}\mu \mathrm{d}s +\int_0^t\int_0^1 \{b^{\mu^\theta}_y(s)\mathbb{E}[\int_0^1\varphi^{\vartheta^\theta}_x(s)\diffns \vartheta Z(s)] \\
			& -b^*_y(s)\mathbb{E}[\varphi^*_x(X(s))Z(s)]\}\mathrm{d}\mu \mathrm{d}s + \int_0^t\int_0^1 \sigma_x^{\mu^\theta}(s)Z_1^\theta(s)\mathrm{d}\mu \mathrm{d}B(s) + \int_0^t\int_0^1 \{\sigma_x^{\mu^\theta}(s) \\
			&
			-\sigma^*_x(s)\}Z(s)\mathrm{d}\mu \mathrm{d}B(s) + \int_0^t\int_0^1 \sigma^{\mu^\theta}_y(s)\mathbb{E}[\int^1_0\varphi^{\vartheta^\theta}_x(s)\diffns \vartheta Z_1^\theta(s)]\mathrm{d}\mu \mathrm{d}B(s) \\
			&+ \int_0^t\int_0^1 \{\sigma^{\mu^\theta}_y(s)\mathbb{E}[\int_0^1\varphi_x^{\vartheta^\theta}(s)\diffns \vartheta Z(s)] -\sigma^*_y(s)\mathbb{E}[\varphi^*_x(X(s))Z(s)]\}\mathrm{d}\mu \mathrm{d}B(s) \\
			&+ \int_0^t\int_0^1 \gamma^{\mu^\theta}_x(s)Z_1^\theta(s)\mathrm{d}\mu \mathrm{d}\tilde{\Phi}(s)
			+ \int_0^t\int_0^1 \{\gamma^{\mu^\theta}_x(s)-\sigma^*_x(s)\}Z(s)\mathrm{d}\mu \mathrm{d}\tilde{\Phi}(s) \\
			&+ \int_0^t\int_0^1 \gamma^{\mu^\theta}_y(s)\mathbb{E}[\int_0^1\varphi_x^{\vartheta^\theta}(s)\diffns \vartheta Z_1^\theta(s)]\mathrm{d}\mu \mathrm{d}\tilde{\Phi}(s)\\
			&+ \int_0^t\int_0^1 \{\gamma^{\mu^\theta}_y(s)\mathbb{E}[\int_0^1\varphi_x^{\vartheta^\theta}(s)\diffns \vartheta Z(s)] -\gamma^*_x(s)\mathbb{E}[\varphi^*_x(X(s))Z(s)]\}\mathrm{d}\mu \mathrm{d}\tilde{\Phi}(s). 
		\end{align*}
	
	Therefore, squaring both sides, taking the supremum and the expectation, using B-D-G gives
	
	\begin{align*}
		\mathbb{E}&[\sup_{t\in [0,T]}|Z_{1}^{\theta}(t)|^{2}]\\
		\le& C\{\int_{0}^{T}\int_{0}^{1}(\mathbb{E}[\sup_{t\in [0,T]}|b_{x}^{\mu^\theta}(s)Z_{1}^{\theta}(s)|^{2}] + \mathbb{E}[\sup_{s\in [0,T]}|b^{\mu^\theta}_{y}(s) \mathbb{E}[\int_0^1\varphi^{\vartheta^\theta}_{x}(s)\diffns \vartheta Z_{1}^{\theta}(s)]|^{2}] \\
		&+ \mathbb{E}[\sup_{t\in [0,T]}|\sigma^{\mu^\theta}_{x}(s)Z_{1}^{\theta}(s)|^{2}] + \mathbb{E}[\sup_{s\in [0,T]}|\sigma^{\mu^\theta}_{y}(s)\mathbb{E}[\int_0^1\varphi^{\vartheta^\theta}_{x}(s)\diffns \vartheta Z_{1}^{\theta}(s)]|^{2}]\\
		&+ \mathbb{E}[\sup_{t\in [0,T]}|\gamma^{\mu^\theta}_{x}(s)Z_{1}^{\theta}(s)|^{2}]\zeta(s) + \mathbb{E}[\sup_{s\in [0,T]}|\gamma^{\mu^\theta}_{y}(s) \mathbb{E}[ \int_0^1\varphi^{\vartheta^\theta}_{x}(s)\diffns \vartheta Z_{1}^{\theta}(s)]|^{2}]\zeta(s) \\
		&+ \mathbb{E}[\sup_{s\in [0,T]}|(b^{\mu^\theta}_{x}(s) - b^*_{x}(s))Z(s)|^{2}] + \mathbb{E}[\sup_{s\in [0,T]}|(b_{y}^{\mu^\theta}(s) - b^*_{y}(s))Z(s)|^{2}]\\
		& + \mathbb{E}[\sup_{s\in [0,T]}| b^*_{y}(s) \mathbb{E}[ \int_0^1(\varphi^{\vartheta^\theta}_{x}(s)-\varphi_x(s))\diffns \vartheta Z(s)]|^{2}] \\
		&+ \mathbb{E}[\sup_{s\in [0,T]}|(\sigma^{\mu^\theta}_{x}(s) - \sigma^*_{x}(s))Z(s)|^{2}] + \mathbb{E}[\sup_{s\in [0,T]}|(\sigma_{y}^{\mu^\theta}(s) - \sigma^*_{y}(s))Z(s)|^{2}] \\
		&+ \mathbb{E}[\sup_{s\in [0,T]}|\sigma^*_{y}(s)  \mathbb{E}[ \int_0^1(\varphi^{\vartheta^\theta}_{x}(s)-\varphi_x(s))\diffns \vartheta Z(s)]|^{2}] \\
		&+ \mathbb{E}[\sup_{s\in [0,T]}|(\gamma^{\mu^\theta}_{x}(s) - \gamma^*_{x}(s))Z(s)|^{2}]\zeta(s) + \mathbb{E}[\sup_{s\in [0,T]}|(\gamma_{y}^{\mu^\theta}(s) - \gamma^*_{y}(s))Z(s)|^{2}]\zeta(s)\\
		&+ \mathbb{E}[\sup_{s\in [0,T]}| \gamma^*_{y}(s)  \mathbb{E}[ \int_0^1(\varphi^{\vartheta^\theta}_{x}(s)-\varphi_x(s))\diffns \vartheta Z(s)]|^{2}]\zeta(s)) \diffns \mu \mathrm{d}s\}.
	\end{align*}
Using both the continuity and the boundedness of $g_{i} = b_x, b_y, b_u, \sigma_x, \sigma_y,\sigma_u, \gamma_x, \gamma_y, \gamma_u$, and $\varphi_x$, the result follows by application of Gronwall's lemma.
\end{proof}

\begin{proof}[Proof of Lemma \ref{lem:man6lemma5}] 
	Let
		\begin{align*}
			\tilde{X}^{\theta,\xi^{*}}(t) = X^{\theta,\xi^{*}}(t) - \int_0^tG(s, \alpha(s-))\mathrm{d}\xi^{*}(s),\,
			\tilde{X}^{*}(t) = X^{*}(t) - \int_0^t G(s, \alpha(s-))\mathrm{d}\xi^{*}(s).
		\end{align*}

	Then 
	$X^{\theta,\xi^{*}}(t) - X^{*}(t) - X_1(t) - X_2(t) = \tilde{X}^{\theta,\xi^{*}}(t) -  \tilde{X}^{*}(t) -  X_1(t) - X_2(t).$ 
	Similar to \cite{zhang2018general}, let $\bar{X}(t) = \tilde{X}^{\theta,\xi^{*}}(t) -  \tilde{X}^{*}(t) -  X_1(t) - X_2(t).$
	Then
		\begin{equation*}
			\mathrm{d}\bar{X}(t) = I_1^\theta(t;b)\mathrm{d}t + I_2^\theta(t;\sigma)\mathrm{d}B(t) + \sum\limits_{j=1}^{D} I_3^\theta(t;\gamma^{j})\mathrm{d}\tilde{\Phi}_{j}(t),
		\end{equation*}

	where

		\begin{align*}
			I_1^\theta(t;b) :=& b(t,X^{\theta,\xi^{*}}(t),\mathbb{E}[\varphi(X^{\theta,\xi^{*}}(t))],u^\theta(t),\alpha(t)) \\
			&- b(t,X^{*}(t),\mathbb{E}[\varphi(X^{*}(t))],u^\theta(t),\alpha(t)) - b^*_x(t)(X_1(t)+X_2(t))\\
			&- b^*_y(t)\mathbb{E}[\varphi_x(X(t))(X_1(t)+X_2(t))]- \frac{1}{2}b_{xx}(t)X_1^2(t)\\
			&- \delta b_x(t,u(t))\mathbbm{1}_{E_{\theta}}(t)X_1(t),\\
			I_{2}^{\theta}(t;\sigma) :=& \sigma(t,X^{\theta,\xi^{*}}(t),\mathbb{E}[\varphi(X^{\theta,\xi^{*}}(t))],u^\theta(t),\alpha(t)) \\
			&- \sigma(t,X^{*}(t),\mathbb{E}[\varphi(X^{*}(t))],u^\theta(t),\alpha(t)) - \sigma^*_x(t)(X_1(t)+X_2(t))\\
			&- \sigma^*_y(t)\mathbb{E}[\varphi_x(X(t))(X_1(t)+X_2(t))] - \frac{1}{2}\sigma_{xx}(t)X_1^2(t)\\
			&- \delta \sigma_x(t,u(t))\mathbbm{1}_{E_{\theta}}(t)X_1(t),\\
			I_{3}^{\theta}(t;\gamma^{j}) :=& \gamma^{j}(t,X^{\theta,\xi^{*}}(t),\mathbb{E}[\varphi(X^{\theta,\xi^{*}}(t))],u^\theta(t),\alpha(t)) \\
			&- \gamma^{j}(t,X^{*}(t),\mathbb{E}[X^{*}(t)],u^\theta(t),\alpha(t)) - \gamma^{j*}_x(t)(X_1(t)+X_2(t))\\
			&- \gamma^{j*}_y(t)\mathbb{E}[\varphi_x(X(t))(X_1(t)+X_2(t))]- \frac{1}{2}b_{xx}(t)X_1^2(t)\\
			&- \delta \gamma^{j}_x(t,u(t))\mathbbm{1}_{E_{\theta}}(t)X_1(t).
		\end{align*}
	
	This term is analogous to \cite[Proposition 4.3]{zhang2018general}, and the proof follows by applying the same steps.

\end{proof}

\begin{proof}[Proof of Proposition \ref{prop:man6propositionpage201}]
	Similar to the proof of Lemma \ref{lem:man6lemma5}. 
\end{proof}

\bigskip
\begin{small}
		\smallskip\noindent\textbf{Funding~} 
		The work of M.~Ganet Some was carried out with the aid of a grant from the International Development Research Centre, Ottawa, Canada, \url{www.idrc.ca}, and with financial support from the Government of Canada, provided through Global Affairs Canada (GAC), \url{www.international.gc.ca};	Brandenburg University of Technology Cottbus-Senftenberg, Erasmus+ KA171 and research scholarship.
		He also gratefully acknowledges the  support by the German Academic Exchange Service, Grant/Award Number 57417894. \\
		E. Korveh was supported by the DAAD through the In-country/In-region PhD programme at AIMS Ghana.
		\\				
	
\end{small}


\bibliographystyle{acm}
\bibliography{references}

\begin{thebibliography}{10}

\bibitem{agram2022mean}
{\sc Agram, N., Hu, Y., and {\O}ksendal, B.}
\newblock Mean-field backward stochastic differential equations and
  applications.
\newblock {\em Systems \& Control Letters 162\/} (2022), 105196.

\bibitem{andersson2011maximum}
{\sc Andersson, D., and Djehiche, B.}
\newblock A maximum principle for {SDE}s of mean-field type.
\newblock {\em Appl. Math. Optim. 63\/} (2011), 341--356.

\bibitem{bahlali1996maximum}
{\sc Bahlali, K., Mezerdi, B., and Ouknine, Y.}
\newblock The maximum principle for optimal control of diffusions with
  non-smooth coefficients.
\newblock {\em Stochastics 57}, 3-4 (1996), 303--316.

\bibitem{bahlali2005general}
{\sc Bahlali, S., and Mezerdi, B.}
\newblock A general stochastic maximum principle for singular control problems.
\newblock {\em Electron. J. Probab. 10}, 30 (2005), 988--1004.

\bibitem{bensoussan1981lectures}
{\sc Bensoussan, A.}
\newblock Lectures on stochastic control, lecture notes in mathematics.
\newblock {\em Nonlinear filtering and stochastic control, proceedings, Cortona
  972\/} (1981).

\bibitem{buckdahn2009mean}
{\sc Buckdahn, R., Djehiche, B., Li, J., and Peng, S.}
\newblock Mean-field backward stochastic differential equations: a limit
  approach.
\newblock {\em Ann. Probab. 37}, 4 (2009), 1524--1565.

\bibitem{cadenillas1994stochastic}
{\sc Cadenillas, A., and Haussmann, U.~G.}
\newblock The stochastic maximum principle for a singular control problem.
\newblock {\em Stochastics: An International Journal of Probability and
  Stochastic Processes 49}, 3-4 (1994), 211--237.

\bibitem{cadenillas1995stochastic}
{\sc Cadenillas, A., and Karatzas, I.}
\newblock The stochastic maximum principle for linear, convex optimal control
  with random coefficients.
\newblock {\em SIAM J. Control Optim. 33}, 2 (1995), 590--624.

\bibitem{carmona2013mean}
{\sc Carmona, R., Fouque, J.-P., and Sun, L.-H.}
\newblock Mean field games and systemic risk.
\newblock {\em arXiv preprint arXiv:1308.2172\/} (2013).

\bibitem{dahl2017singular}
{\sc Dahl, K.~R., and {\O}ksendal, B.}
\newblock Singular recursive utility.
\newblock {\em Stochastics 89}, 6-7 (2017), 994--1014.

\bibitem{donnelly2011sufficient}
{\sc Donnelly, C.}
\newblock Sufficient stochastic maximum principle in a regime-switching
  diffusion model.
\newblock {\em Appl. Math. Optim. 64\/} (2011), 155--169.

\bibitem{elliott1990optimal}
{\sc Elliott, R.~J.}
\newblock The optimal control of diffusions.
\newblock {\em Appl. Math. Optim. 22}, 1 (1990), 229--240.

\bibitem{elliott1994exact}
{\sc Elliott, R.~J.}
\newblock Exact adaptive filters for {M}arkov chains observed in {G}aussian
  noise.
\newblock {\em Automatica J. IFAC 30}, 9 (1994), 1399--1408.

\bibitem{elliott2008hidden}
{\sc Elliott, R.~J., Aggoun, L., and Moore, J.~B.}
\newblock {\em Hidden Markov models: estimation and control}, vol.~29.
\newblock Springer Science \& Business Media, 2008.

\bibitem{hafayed2014singular}
{\sc Hafayed, M.}
\newblock Singular mean-field optimal control for forward-backward stochastic
  systems and applications to finance.
\newblock {\em Int. J. Dyn. Control 2\/} (2014), 542--554.

\bibitem{haussmann1986stochastic}
{\sc Haussmann, U.~G.}
\newblock {\em A stochastic maximum principle for optimal control of
  diffusions}.
\newblock John Wiley \& Sons, Inc., 1986.

\bibitem{hu2017singular}
{\sc Hu, Y., {\O}ksendal, B., and Sulem, A.}
\newblock Singular mean-field control games.
\newblock {\em Stoch. Anal. Appl. 35}, 5 (2017), 823--851.

\bibitem{kushner1972necessary}
{\sc Kushner, H.}
\newblock Necessary conditions for continuous parameter stochastic optimization
  problems.
\newblock {\em SIAM J. Control Optim. 10}, 3 (1972), 550--565.

\bibitem{kushner1965stochastic}
{\sc Kushner, H.~J.}
\newblock On the stochastic maximum principle: Fixed time of control.
\newblock {\em J. Math. Anal. Appl. 11\/} (1965), 78--92.

\bibitem{lasry2007mean}
{\sc Lasry, J.-M., and Lions, P.-L.}
\newblock Mean field games.
\newblock {\em Jpn. J. Math. 2}, 1 (2007), 229--260.

\bibitem{Menou20142}
{\sc Menoukeu-Pamen, O.}
\newblock Maximum principles of {M}arkov regime-switching forward-backward
  stochastic differential equations with jumps and partial information.
\newblock {\em J. Optim. Theory Appl. 175\/} (2017), 373--410.

\bibitem{MeMo17}
{\sc Menoukeu-Pamen, O., and Momeya, R.}
\newblock A maximum principle for markov regime-switching forward--backward
  stochastic differential games and applications.
\newblock {\em Math. Methods Oper. Res. 85\/} (2017), 349--388.

\bibitem{Thilo2012}
{\sc Meyer-Brandis, T., {\O}ksendal, B., and Zhou, X.~Y.}
\newblock A mean-field stochastic maximum principle via malliavin calculus.
\newblock {\em Stochastics, control and finance 84}, 5--6 (2012), 643--666.

\bibitem{nguyen2020stochastic}
{\sc Nguyen, S.~L., Nguyen, D.~T., and Yin, G.}
\newblock A stochastic maximum principle for switching diffusions using
  conditional mean-fields with applications to control problems.
\newblock {\em ESAIM Control Optim. Calc. Var. 26\/} (2020), 69.

\bibitem{protter2005stochastic}
{\sc Protter, P.~E.}
\newblock {\em Stochastic Integration and Differential Equations}, vol.~21.
\newblock Springer Science \& Business Media, 2005.

\bibitem{shen2013maximum}
{\sc Shen, Y., and Siu, T.~K.}
\newblock The maximum principle for a jump-diffusion mean-field model and its
  application to the mean--variance problem.
\newblock {\em Nonlinear Anal. 86\/} (2013), 58--73.

\bibitem{tao2012maximum}
{\sc Tao, R., and Wu, Z.}
\newblock Maximum principle for optimal control problems of forward--backward
  regime-switching system and applications.
\newblock {\em Systems Control Lett. 61}, 9 (2012), 911--917.

\bibitem{wu2024maximum}
{\sc Wu, Z., and Zhang, Y.}
\newblock Maximum principle for conditional mean-field {FBSDEs} systems with
  regime-switching involving impulse controls.
\newblock {\em Journal of Mathematical Analysis and Applications 530}, 2
  (2024), 127720.

\bibitem{zhang2012stochastic}
{\sc Zhang, X., Elliott, R.~J., and Siu, T.~K.}
\newblock A stochastic maximum principle for a markov regime-switching
  jump-diffusion model and its application to finance.
\newblock {\em SIAM J. Control Optim. 50}, 2 (2012), 964--990.

\bibitem{zhang2018general}
{\sc Zhang, X., Sun, Z., and Xiong, J.}
\newblock A general stochastic maximum principle for a markov regime switching
  jump-diffusion model of mean-field type.
\newblock {\em SIAM J. Control Optim. 56}, 4 (2018), 2563--2592.

\end{thebibliography}
\addcontentsline{toc}{chapter}{References}

\newpage
\footnotesize

\end{document}